\def\N{\mathbb{N}}
\def\P{\mathbb{P}}
\def\E{\mathbb{E}}
\def\R{\mathbb{R}}
\def\K{\mathcal{K}}
\def\d{\mathfrak{f}}
\def\tauxmort{\beta}
\def\tauxmorti{\beta_i}
\def\tauxnaissance{\gamma}
\def\tauxnaissancei{\gamma_i}
\def\tauxinter{\alpha}
\def\tauxinteri{\alpha_i}
\def\pfff{u}
\def\tauxgrowth{\lambda }
\def\lyap{\mathbf{V}}
\newcommand{\be} {\begin{equation}}
\newcommand{\ee} {\end{equation}}
\newcommand{\bea} {\begin{eqnarray}}
\newcommand{\eea} {\end{eqnarray}}
\newcommand{\Bea} {\begin{eqnarray*}}
\newcommand{\Eea} {\end{eqnarray*}}
\newtheorem{prop}{Proposition}[section]
\newtheorem{assumption}[prop]{Assumption}
\newtheorem{lem}[prop]{Lemma}
\newtheorem{thm}[prop]{Theorem}
\def\cmb#1{\marginpar{\raggedright\tiny{\textcolor{red}{Bertrand} : \textcolor{blue}{#1}}}}
\begin{document}
\title{From the distributions of  times of interactions to \\
preys and predators dynamical systems}


\author{Vincent Bansaye\footnote{CMAP, Ecole polytechnique,  Palaiseau} \,  and  Bertand Cloez\footnote{INRAe, Mistea, Montpellier}}
\maketitle \vspace{3cm}

\begin{abstract} 
We consider a stochastic individual based model
where each predator searches  during a random time and then manipulates its prey or rests. The time distributions may be non-exponential.
An age structure allows to describe these interactions  and get a Markovian setting. The process is characterized by a measure-valued stochastic differential equation. We  prove averaging results in this infinite dimensional setting and get the convergence of  the slow-fast  macroscopic prey predator process to a two dimensional dynamical system. We recover classical functional responses. We also get new forms arising in particular when births and deaths of predators are affected by the lack of food.
\end{abstract}

\tableofcontents%

\section{Introduction}

Functional responses are widely  used to quantify interactions  between species  in  ecology.   The way  functional responses arise at the macroscopic level and describe population dynamics
or evolution is  a fundamental issue for species conservation or
statistical inference of parameters. Indeed, their  form  influences
 the stability properties of dynamics, their  long time behavior or speed of convergence.  The link between individual behavior and macroscopic dynamics has attracted lots of attention for chemical reactions and population dynamics from the works of Michael and Menten. \\
 
 Macroscopic derivation from individual based model rely in general
on  a large population approximation of finite dimensional  Markov processes  describing the number of individuals of each species, possibly structured in status (searching, handling...), space or size.
In this setting, Kurtz and Popovic \cite{KKP} obtain the classical Michaelis Menten and Holling functional responses in limiting dynamical systems and fluctuations of processes around these limits. In our context of prey-pradators interactions, let us  mention \cite{DS} which starts from a stochastic individual based model. They derive
 a finite dimensional Markov chain
and   convergence to   ODEs involving the classical functional responses.
In \cite{CKBG},  a simple decision  tree  based on game-theoretical  approach  response is developed.   Similarly, random walks and Poisson type process are used in \cite{AKF} to describe functional responses. The  reduced model counting only the total number 
of preys and the total number of predators, without distinguishing their status,  is also classically 
derived directly from the macroscopic ODEs \cite{JKT,BBRS, HBR}. Again, it uses a slow-fast scaling
  and the associated quasi-steady-state approximation.  These Markov settings allow for justification of macroscopic equations in a context of absence of memory of interactions. Indeed, the time for associated interactions are then exponentially distributed, potentially up to the addition of the relevant successive state to describe the interaction. 

Random times involved  in ecological or biological interactions are in general non-exponentially distributed, see \cite{Duijns, BBC18} and references therein.  Indeed, handling or manipulation times  may  have small standard deviations compared to the mean, while exponential distribution forces the value of variance once the mean is fixed. Besides, as far as we see, these  times  seem to be distributed with one mode. Finally, foraging suggests that the probability of finding a prey eventually increases with searching time for a given density of preys. The aim of the paper is to consider general distribution  for the times  describing  interactions. 
We extend approximation results 
relying on absence of memory and obtain a reduced model.
 We also obtain new features due to the fact that mortality depends on prey consumption and life length is not exponentially distributed.
Following in particular  \cite{BBC18} and references therein, we  model the interaction
by a renewal process for each predator, with two status. Each predator successively searches during a random time and then manipulates during an other random time, which may include rest or other interactions.
We assume that these time distributions  admit a  density with respect to Lebesgue measure and  density dependence. Extension of the current approach to more than two status for predators would be straightforward. \\

Let us  first describe  informally the model. We write $n_1\in \N$ the number of  preys and $n_2\in \N$ the number of predators. Predators 
then search preys during a  random time distributed as a random variable $T_S(n_1)$. Typically the more $n_1$ is large, the smaller $T_S(n_1)$ should be. At the end of this time, one prey is caught and the population of preys becomes $n_1-1$. The predator changes its status and now manipulates during  a time distributed as  $T_M(n_1-1)$. Several predators follow simultaneously and
 independently this dynamics, but they live with a common number of preys and impact each other through this common resource.
 Besides, each predator gives birth and dies with respective individual rates  $\tauxnaissance_r(u)$ and $\tauxmort_r(u)$, which depends on their status  $r\in \{S,M\}$ and the time $u$ from which they are in this status. Typically, the fact that the predator does not find a prey make  its death rate  $\tauxmort_S(u)$ increase with $u$. Preys also give birth and die,   with fixed rates $\tauxnaissance$ and $\tauxmort$. \\
 We assume that preys are at scale $K_1$ and predators at scale $K_2$ and  that $K_1\gg K_2$. That means that preys are much more numerous than predators. 
  A slow-fast dynamic is considered : the  time scale  of prey-predator interactions  is short compared to the time scale of birth and death  of predators and preys. It 
  means that each predator  eats many preys during its life and, if a prey is not eaten by a predator then its life length is comparable to the ones of predators.  After scaling, we show, that the couple  of stochastic  processes describing the quantities of preys of predators converge in law  in $\mathbb D([0,\infty), \R_+^2)$ as $K_1,K_2$ tend to infinity, to the unique solution  $(x,y)$ of  an ordinary differential equation:
\begin{equation*}
\left\{
\begin{array}{rcl}
x'(t)&=&(\tauxnaissance - \tauxmort)x(t)-y(t)\phi(x(t)),\\
y'(t)&=&y(t)\psi(x(t)),
\end{array}
\right.
\end{equation*}
where
\begin{align}
\label{eq:phi-intro}
\phi(x)&=  \frac{1}{\E[T_S(x)  + T_M(x)]},
\end{align}
and
\begin{align}
\label{eq:psi-intro}
\psi(x)&=\frac{\E\left[ \int_0^{T_S(x)} (\tauxnaissance_S(u) - \tauxmort_S(u)) du \, + \, \int_0^{T_M(x)} (\tauxnaissance_M(u) - \tauxmort_M(u)) du\right]}{\E[T_S(x)  + T_M(x)]}.
\end{align}
This limit theorem will be illustrated both by classical and new functional responses  in Section~\ref{se:exemple}. We observe that the response $\phi$ of preys due to predatory  is only sensitive to mean time of interactions. It thus extends the exponential case to more general distribution. At this macroscopic  scale,  for the population of preys, the distribution of times involved in interactions  plays a role only through its mean.   In contrast, the growth rate $\psi$ of the population of predators is in general sensitive to the other characteristics
of the distribution. We add  that the distribution of time of interactions should also  impact the dynamics of the population of preys at a second order, \textit{i.e.} for fluctuations. This is relevant in particular when population are not too large and left for a future work.

The fact that the time of interactions is both density dependent and non-exponentially distributed leads us to extend the state space. This  procedure
to get the Markov setting is classical and consists here in an additional 
age structure.  We then
 exploit the generator and martingale problem and get also the age distribution of preys. The problem arising is then an averaging in infinite dimension. The strategy of proof follows the techniques developed in \cite{KKP} in finite dimension using the occupation measure. In infinite dimension, much less work has been done up to our knowledge. Let us mention \cite{MT12} which considers averaging with an age structure and has also inspired this work. Two main differences appear in our context : the age structure is due to interactions and the rates involved are not bounded, since tail distribution of times may for instance decrease faster than exponentially.   \\
We consider a punctual measure whose atoms give the status and the age  of predators, which is here the length of time since they have been in this status. Other relevant ages could be added, in particular the time from the birth. However, it seems superfluous at our stage. In our slow-fast dynamics, there is an averaging   phenomenon and the numbers of predators in each status are instantaneously at equilibrium. This enables to reduce the infinite-dimensional model to a two-dimensional system of equations. The averaging phenomenon in finite dimension is classical \cite{KKP,BKPR, C16, MT12}.
The reduction of the infinite setting to a finite one describing the number of preys and predators may be less. Following \cite{K92,KKP}, the occupation measure $\Gamma^K$, given by $\Gamma^K([0,t])=\int_0^t \delta_{Y^K_s} ds$  deals with the fast time component $Y^K$, here the predations. In our setting, $Y^K$ is the distribution of ages and status and is thus defined as a punctual measure. Instead of considering a measure whose atoms are punctual measures, we consider the mean measure $\Gamma^K([0,t])=\int_0^t Y^K_s ds$, which is enough for our purpose. Consequently, 
our measure $\Gamma^K$ will not degenerate to some measure of the form $\int_0^t \delta_{f(X_s)} ds$, for some function $f$, but tends to some specific distribution. \\
$\newline$

The paper is structured as follows. In Section~\ref{se:model}, we  define
the integer valued model without any time or  population size scaling. We characterize the process as the unique strong solution of a stochastic differential equation 
and give  first properties about its semimartingale decomposition. In particular, the key technical point is the control of the age distribution in our setting, by exploiting the local time associated to the renewal procedure. In Section~\ref{se:scaling}, we introduce the  scaled process
and state  our main result, Theorem~\ref{thm:main}. The result is proved by tightness and identification of the limit using the occupation measure. Finally, we end our work through several examples in Section~\ref{se:exemple}. \\
$\newline$
{\bf Notation}.  We write $a_{\infty}\in (0,+\infty]$ the maximal age and
$$
\mathcal X=\{S,M\}\times [0,a_\infty), \quad 
$$
the state space of predators endowed with the product $\sigma$-algebra.\\
We denote by  $\mathfrak{M}(\mathcal S)$ the set of finite  measures on any topological space $\mathcal S$ endowed with its Borel algebra. We endow $\mathfrak{M}(\mathcal S)$  with the narrow (or weak) topology: that is $\mu_n$ tends to $\mu$ if and only if for every continuous and bounded function $f$ on $\mathcal S$,
$$
\lim_{n\to \infty} \int_{\mathcal S} f d\mu_n = \int_{\mathcal S} f d\mu.
$$
For $r\in\{S,M\}$, we write $\overline{r}$  the complementary status of $r$, \textit{i.e.} the unique element of $\{S,M\}\setminus\{r\}$.\\
We finally denote by
$\mathcal C^{1,b}( \mathcal{X})$ (resp. $C^{1,b}(\mathcal U\times \mathcal X)$ and 
$ \mathcal C^{1,b}([0,a_{\infty}))$)
 the space of measurable and bounded functions 
from $\{S,M\}\times [0,a_\infty)$ (resp.  $\mathcal U \times \{S,M\}\times [0,a_\infty)$ and $[0,a_\infty))$ to $\R$ such that 
 $f$ is continuously differentiable with respect to its second (resp. third, resp. first) variable, with bounded derivative.

\section{The stochastic individual based model}
\label{se:model}

In this section, we define the discrete individual based model using stochastic differential equations with jumps. The accelerated normalized process and its approximation will be studied in the next section using properties and estimates obtained here.\\
 Each predator is characterized by a status $r\in \{S,M\}$ and an age $a \in [0,a_{\infty})$ . To formalize conveniently this modeling, we  label each predator using classical Ulam-Harris-Neveu notation and describe the associated genealogical tree. The set of individuals is
$$\mathcal U =\mathbb \N\times \cup_{k\geq 0} \{1,2\}^k.$$
For short, we write  $u=u_0u_1\ldots u_k\in \mathcal U$ and $u$ then corresponds to an individual living
in generation $\vert u \vert =k$ and whose ancestor in generation $i$
is $u_0\ldots u_i$ for $0\leq i\leq k$. At each reproduction event, we assume for simplicity that every predator $u$ only gives birth to one predator and we label
the mother by $u1$ and its child by $u2$.
The population of predators alive at time $t$ is denoted by $\mathcal P(t)$ and is a subset of $\mathcal U$. For each predator $i\in \mathcal P(t)$, we write $r_i(t)\in \{S,M\}$ its status at time $t$, which indicate respectively that  it searches or manipulates.
We write $a_i(t)$ its age, namely the time from which it searches or manipulates.
Finally, we write $X(t)\in \N$ the number of preys at time $t$. \\
The state of the population  is then given by the process $$Z=(Z(t))_{t\geq 0}=(X(t), Y(t))_{t\geq 0},$$
 where the measure $Y$ describes the predators  and is given for all $t\geq 0$ by
$$
 Y(t)= \sum_{i\in \mathcal P(t)} \delta_{(i,r_i(t),a_i(t))}.
$$
 For any $t\geq 0$,  $Y(t)\in \mathfrak{M}(\mathcal{U} \times \mathcal X)$, where we recall
 that  $
\mathcal X=\{S,M\}\times [0,a_\infty).
$
Besides,  for any $U\subset \mathcal U$, the projected measure
$$Y(t,U,\{r\}, \cdot)=  \sum_{i\in \mathcal P(t), \, r_i(t)=r} \delta_{a_i(t)}$$ 
gives  the  collection of ages  of predators in status $r$ at time $t$, whose labels belong to $U$.
 The total number of predators at time $t$ is then $Y(t,\mathcal U,\{S,M\}, \mathbb{R}_+)$.\\

Let us now describe the population dynamic. For status $r \in\{S,M\}$ and in the presence of $x\in \mathbb N$ preys, we assume that the  time for interaction $T_{r}(x)$ is a random variable with support $[0,a_{\infty})$. We also assume that it admits a density, with respect to the Lebesgue measure,  $\d_{r}(x,\cdot)$. The associated jump rate $\tauxinter$ is defined  for $(a,x)\in \mathcal X$ by 
$$
\tauxinter_{r}(a,x)
=\frac{\d_{r}(a,x)}{\int_{[a,a_\infty)}\d_r(u,x)du},
$$
  It gives the rate at which a predator changes its status when its age in its current status is
  equal to $a$ and the number of preys is $x$.  If the predator was searching, it provokes a death of a prey.  We do not assume that these interactions rates $\alpha_r$ are lower and upper bounded. Indeed, for instance in the case when   the time of interaction  has a finite support ($a_{\infty}<\infty$) or a subexponential tail, it is not upperbounded, even for a given number of preys.    Let us 
  consider  some classical distribution that will be captured in our setting:
\begin{itemize}
\item Exponential law: $\d(a,x)=\lambda(x) e^{-\lambda(x) a}$ and $\tauxinter(a,x)= \lambda(x)$, for some bounded function $\lambda$.
\item Log-normal distribution :  $\d(a,x)=\frac{1}{a\sigma(x)\sqrt{2\pi}}\exp\left(-\frac{(\log(a)-\mu(x))^2)}{2\sigma(x)^2}\right)$ and $\tauxinter$ has no explicit form.
\item Uniform law: $\d(a,x)= \mathbf{1}_{[0,1]}(a)$ and $\alpha(a,x) = (1-a)^{-1} \mathbf{1}_{[0,1)}$.
\item Pareto law : $\d(a,x)= k(x) (z(x)^{k(x)}/a)^{k(x)} \mathbf{1}_{a\geq z(x)}$ and $\alpha(a,x)=k(x)/a \mathbf{1}_{a \geq z(x)}$, for some bounded functions $k:\mathbb{N} \to (0,+\infty)$ and $z:\mathbb{N} \to (1,+\infty)$.
\end{itemize}

Finally, predators may give birth or die with respective measurable rates $a\mapsto\tauxnaissance_{r}(a)$ and $a\mapsto\tauxmort_{r}(a)$ which depend on their status $r\in \{S,M\}$ and their interaction age $a$. In particular, lack of nutrition affects survival and reproduction and $\tauxnaissance_{S}$ may be decreasing and $\tauxmort_{S}$ may be increasing.
For sake of simplicity and realism, we assume these rates are bounded.
 For the preys, birth and death rates are non-negative numbers denoted by $\tauxnaissance$ and $\tauxmort$.

\subsection{Existence and trajectorial representation}

Following for instance \cite{FM04, T06, BT10}, we construct and characterize 
$(Z(t))_{t\geq 0}$ as the unique strong solution of a stochastic differential equation. For every $i\in \mathcal{U}$, we let $\mathcal N^i$  be independent Poisson punctual point measures  on $\R_+^2$ with intensity the Lebesgue measure. These measures  provide the random times  when a predator changes its status between searching and manipulating.
We introduce also   independent Poisson punctual point measures $\mathcal M^i$ and $\mathcal Q$ on $\R_+^2$
with intensity  the Lebesgue measure. They are independent of $(\mathcal N^i, i\in \mathcal U)$ and describe births and deaths of preys and predators. For convenience, we write
$$ \tauxinteri(s)=\tauxinter_{r_i(s)}(a_i(s),X(s)), \quad \tauxnaissancei(s)=\tauxnaissance_{r_i(s)}( a_i(s)), \quad  \tauxmorti(s)= \tauxmort_{r_i(s)} (a_i(s)).$$
We consider the following equation for the evolution of the number of preys for
  $t\geq 0$, 
\begin{align}
X(t)&= X(0)-\int_0^t  \sum_{\substack{i\in \mathcal P(s-) \\ r_i(s-)=S}}  \int_{\mathbb{R}_+}\mathbf{1}_{\{u\leq \tauxinteri(s-)\}} \, \mathcal N^i(ds,  du)\nonumber \\
&\qquad \qquad +\int_0^t \int_{\mathbb{R}_+}  \left(\mathbf{1}_{\{u\leq \tauxnaissance X(s-)\}} - \mathbf{1}_{\{0 < u- \tauxnaissance X(s-)\leq \tauxmort X(s-)\}} \right)\, \mathcal Q(ds,  du). \label{S1}
\end{align}
Indeed, the number of preys  decreases when they are caught by a predator and also varies independently by  births and deaths. For every function $f \in \mathcal C^{1,b}(\mathcal U\times \mathcal X)$, we consider
\begin{align}
\langle Y(t),&f\rangle=\langle Y(0),f\rangle
+\int_0^t \sum_{i\in \mathcal P(s-)}\partial_a f (i,r_i(s-),a_i(s-)) \, ds
\nonumber \\
&\qquad + \int_0^t \sum_{i\in \mathcal P(s-)}\int_{\R_+} \mathbf{1}_{u\leq \tauxinteri(s-)}\, 
 Df(i,s-) \, \mathcal N^i(ds,  du) \nonumber\\
&\qquad +\int_0^t \sum_{i\in \mathcal P(s-)}\int_{\R_+} \Big(\mathbf{1}_{u\leq \tauxnaissancei(s-)}\,  \Delta f(i,r_i(s-),a_i(s-)) ) \nonumber\\
&\quad \qquad \qquad \qquad \qquad   - \mathbf{1}_{ 0< u- \tauxnaissancei(s-)\leq \tauxmorti(s-) } \, f(i,r_i(s-),a_i(s-))\Big) \mathcal Q^i(ds,  du), \label{S2}
\end{align}
where $\partial_a f $ stands for the partial derivative of $f$ with respect to the third variable and
$$ Df(i,s)= f(i,\overline{r_i}(s), 0)-f(i,r_i(s), a_i(s)); \quad \Delta f(i,r,a)= f(i1,r,a)+f(i2,M,0)-f(i,r,a).$$
Recall that $\overline{r}$ is the complementary status of $r$, \textit{i.e.} the unique element of $\{S,M\}\setminus\{r\}$.
In this equation, status of new born are supposed to be in the
manipulation state  $M$. This choice seems natural  but may sound somewhat arbitrary.
However, more complex choices  would  require additional notations and should have no macroscopic impact. \\

Let us state the existence result and characterize the process using the previous stochastic differential equation. For convenience, we make the following boundedness and regularity assumptions, which are relevant for our purpose.

\begin{assumption}
\label{hyp:ci_ws}
There exists $a_0 \in (0,a_\infty)$ such that $ Y(0,\{S,M\},[a_0,a_\infty))=0$ a.s. \\
 Besides, for any $r\in \{S,M\}$ and $\K>0$,
$$\inf_{x\in [0, \K]}\E(T_r(x)) >0, \qquad \sup_{a\in[0,a_{\infty})} \left( \tauxnaissance_{r}(a)+\tauxmort_{r}(a) \right) <\infty $$
and $a\rightarrow \alpha_r(a,x)$ is continuous on $[0,a_{\infty})$ for any $x\in \N$.
\end{assumption}

\begin{prop}
\label{prop:existe} Let $Z(0)=(X(0),Y(0))$ with $X(0)\in \N$ and $Y(0)$ being a punctual measure in $\mathfrak M(U\times \mathcal X)$ a.s. Under Assumption \ref{hyp:ci_ws},
the system of stochastic differential equations (\ref{S1}-\ref{S2}) admits a  unique strong solution $Z=(X,Y)$ in $\mathbb D([0,\infty), \N\times \mathfrak M(U\times \mathcal X))$
with initial condition $Z(0)$. 
\end{prop}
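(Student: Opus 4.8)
The plan is to construct the solution by a pathwise induction on the successive jump times of the system, exploiting the fact that between jumps the dynamics are purely deterministic (pure aging, i.e. $\partial_a$ transport). I would first argue that, starting from the initial condition $Z(0)$, the first jump time is almost surely positive and finite. The candidate driving objects are the Poisson measures $\mathcal N^i$, $\mathcal M^i$ and $\mathcal Q$, and the key observation is that at any time only finitely many predators are alive, so only finitely many measures $\mathcal N^i$, $\mathcal M^i$ are ``active''. Consequently the global jump rate is, at each fixed time and on the event that the prey number and predator number stay bounded, a finite sum of the interaction rates $\alpha_{r_i}(a_i(s),X(s))$ together with the bounded birth/death rates $\gamma_r,\beta_r$ and the prey rates $\gamma X, \beta X$. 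The difficulty compared with the standard construction of \cite{FM04,BT10} is precisely that the interaction rate $\alpha_r(a,x)$ is \emph{not} bounded, so I cannot simply dominate the whole configuration by a single Poisson process with constant intensity and thread a classical domination argument.

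To handle the unbounded rate I would use the renewal structure directly. The essential point is that the interaction clock of a given predator is, by definition of $\alpha_r$, exactly the hazard rate of the density $\mathfrak f_r(x,\cdot)$: the age $a_i$ of a searching or manipulating individual reaches its next status-change at a time whose law has density $\mathfrak f_{r}(x,\cdot)$ on $[0,a_\infty)$. Thus, rather than bounding $\alpha_r$, I would generate each status change as the first point of $\mathcal N^i$ above the graph $u\le \alpha_{r_i}(a_i(s),X(s))$ and verify, via the standard thinning/inverse-hazard identity, that this point occurs at a finite age almost surely because $T_r(x)$ has support $[0,a_\infty)$ and a genuine density. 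The continuity of $a\mapsto\alpha_r(a,x)$ and the lower bound $\inf_{x\in[0,\mathcal K]}\E(T_r(x))>0$ from Assumption~\ref{hyp:ci_ws} guarantee that these clocks do not accumulate: the age at interaction is bounded below in law uniformly over the relevant range of $x$, so successive interaction times of a single lineage cannot pile up in finite time.

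Next I would rule out explosion, i.e. accumulation of jump times. For the prey component, $X$ is dominated by a pure linear birth process with rate $\gamma X$ (deaths and predations only decrease $X$), which is non-explosive. For the predator component, the number of living predators $\langle Y(t),\mathbf 1\rangle$ jumps by $+1$ at birth events only, at total rate bounded by $\sup_{a,r}\gamma_r(a)$ times the current population, hence is dominated by a Yule process and is a.s. finite for all $t$; this gives that at every finite time only finitely many labels are active, justifying a posteriori the finiteness of the instantaneous rate used above. Combining this with the non-accumulation of interaction clocks, the sequence of jump times increases to $+\infty$ almost surely, so the pathwise construction defines a process on all of $[0,\infty)$ with càdlàg paths, i.e. an element of $\mathbb D([0,\infty),\N\times\mathfrak M(\mathcal U\times\mathcal X))$.

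For uniqueness I would proceed trajectorially: any two solutions driven by the same Poisson measures must agree up to the first jump (both solve the same transport equation $\partial_a$ with the same initial datum), must experience the same first jump since the jump is read off the same atom of the same measure against the same rate, and then agree by induction on the interjump intervals; the a.s. non-explosion just established propagates this identity to all times. The main obstacle, and the step I expect to require the most care, is the control of the age distribution in the presence of the unbounded interaction rate $\alpha_r$ — precisely the point the introduction flags as the ``key technical point''. I would address it not by bounding $\alpha_r$ but by controlling the \emph{local time} of the renewal mechanism, namely the cumulative number of status changes over $[0,t]$, and showing its expectation is finite using the density $\mathfrak f_r$ and the uniform lower bound on $\E(T_r(x))$; this is what converts the naively infinite pointwise rate into an integrable jump-counting process and closes the construction.
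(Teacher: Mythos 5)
Your proposal is correct and follows essentially the same route as the paper's (sketched) proof: iterative pathwise construction between jump times, localization via the hitting time of a large prey level, non-accumulation of events from the bounded demographic rates together with the lower bound on mean interaction times (and local integrability of the hazards $\alpha_r$), Yule-process domination to send the localization time to infinity, and trajectorial uniqueness by induction on the jumps read off the common Poisson measures. The only notable difference is cosmetic: the control of the ``local time'' of the renewal mechanism that you invoke at the end is what the paper exploits later, for the moment bound of Lemma~\ref{lem:boundtightness-pasrenormalise}, rather than in this existence proof, where the cruder non-accumulation argument suffices.
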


\begin{proof} 
The construction of the process and its uniqueness can be achieved iteratively, using the successive random times between each event, see for instance \cite{BM15}. 
The proof is classical and we just give its sketch. The only point to justify is that the successive times where an event (change of status, birth or death) occur do not accumulate. For that purpose, we proceed by a classical localization procedure and
introduce the hitting time $T_K=\inf\{ t : X(t)\geq K\}$.
Before time $T_K$, by Assumption~\ref{hyp:ci_ws}, mean time of change of status are lower bounded and  birth and death  rates of preys and predators are  upper bounded. So a.s.  no accumulation of jumps occurs. We need now to justify that $T_K$ tends a.s. to infinity as $K\rightarrow \infty$. It is achieved by dominating the process $X$ by a pure linear birth process (Yule  process) with birth rate \textit{per capita} $\tauxnaissance$. The fact that this latter does not explode is well known and can be derived  for instance from the finiteness of its first moment. Pathwise uniqueness of  the system of stochastic differential equations
is also obtained by induction on the successive jumps, which are provided by the common Poisson point measures. The  argument  above ensure that uniqueness holds for any positive time. Let us finally note that the system (\ref{S1}-\ref{S2}) is closed, since $\mathcal P(t)$ and $(i,r_i(t),a_i(t))$
are determined (uniquely) by the measure $Y(t)$, which is itself determined by its projections $\langle Y(t),f\rangle$ for $f\in \mathcal C^{1,b}(\mathcal U \times \mathcal X)$.
\end{proof}

\subsection{First estimates and properties}

We start  by a sharp and useful bound on the first moment  of the punctual measures $Y$  evaluated on tests functions which may be non bounded. For convenience, we write

$$\mathcal Y(t,\cdot)= Y(t, \mathcal U,\cdot)= \sum_{i\in \mathcal P(t)} \delta_{(r_i(t),a_i(t))}$$
the projection of the measure $Y(t)$ on $\mathcal X$.
We also introduce the exit time of the number of preys of $ (1/\K,\K)$ for $\K>0$ : 
$$\tau_{\K}=\inf\{ t\geq 0 : X_t \not\in (1/\K,\K)\}. 
$$
We consider the associated bounds on the rate of transitions for $r\in \{S,M\}$,
$$  \overline{\tauxinter}_r(a,\K)=\sup_{x\in (1/\K,\K)} \tauxinter_r(a,x),\quad \underline{\tauxinter}_r(a,\K)=\inf_{x\in (1/\K,\K)} \tauxinter_r(a,x),$$
which are continuous by continuity of $\tauxinter_r$ and
$$
\bar{\tauxnaissance}=\sup_{r\in \{S,M\}, \, a\in[0,a_{\infty})} \tauxnaissance_{r}(a).
$$
\begin{lem}
\label{lem:boundtightness-pasrenormalise} 
 Under Assumption~\ref{hyp:ci_ws}, there exists 
 $C>0$ such that for any continuous function $f :[0,a_{\infty})\rightarrow \R_+$ and $r\in \{S,M\}$ and $\K>0$,
\begin{align*}
\mathbb{E}\left[\int_0^{T  \wedge \tau_{{\K}}} \int_{[0,a_{\infty})} f(a) \mathcal Y(s,\{r\},da)  ds \right] &\leq C (1+T)e^{\bar{\tauxnaissance} T}   \int_{[0,a_{\infty})} f(a)  e^{-\int_0^a \underline{\tauxinter}_r(u,\K) du/2 } \, da.
 \end{align*}
\end{lem}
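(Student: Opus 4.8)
The plan is to read off the evolution of $\langle\mathcal Y(s,\{r\},\cdot),f\rangle=\sum_{i\in\mathcal P(s),\,r_i(s)=r}f(a_i(s))$ from the martingale problem~(\ref{S2}) and to apply Dynkin's formula up to the stopping time $T\wedge\tau_\K$, testing $Y$ against a carefully chosen \emph{time-independent} weight on status and age. Fix $r$ and $\K$ and set
\[
h(a)=\int_a^{a_\infty} f(v)\,\exp\!\Big(-\tfrac12\!\int_a^v \underline{\tauxinter}_r(u,\K)\,du\Big)\,dv ,
\]
which is nonnegative, satisfies $h(0)=\int_0^{a_\infty} f(v)\,e^{-\frac12\int_0^v \underline{\tauxinter}_r(u,\K)\,du}\,dv$ (precisely the integral in the statement) and solves $h'(a)=\tfrac12\underline{\tauxinter}_r(a,\K)\,h(a)-f(a)$. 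I would test $Y$ against $g(r',a)=\ind_{r'=r}\,h(a)$. In the generator of $\langle Y,g\rangle$, aging contributes $h'(a_i)$ for predators in status $r$, a change of status contributes $-\tauxinter_r(a_i,X)h(a_i)$ for those in $r$ and the influx $h(0)\,\tauxinter_{\overline r}(a_i,X)$ for those in $\overline r$ entering $r$, death contributes $-\tauxmort_r(a_i)h(a_i)\le 0$, and a birth contributes $h(0)$ only when $r=M$. The factor $\tfrac12$ is the crux of this step: before $\tau_\K$ one has $\tauxinter_r(a,X)\ge\underline{\tauxinter}_r(a,\K)$, hence for a predator in status $r$
\[
h'(a_i)-\tauxinter_r(a_i,X)h(a_i)\le \tfrac12\underline{\tauxinter}_r(a_i,\K)h(a_i)-f(a_i)-\underline{\tauxinter}_r(a_i,\K)h(a_i)\le -f(a_i),
\]
with a spare nonpositive term $-\tfrac12\underline{\tauxinter}_r h$ that also guarantees integrability of $h$ against the (exponentially decaying) age profile.

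Dropping the nonnegative terminal value $\langle Y(T\wedge\tau_\K),g\rangle$ and the death term, Dynkin's formula then gives
\[
\E\Big[\int_0^{T\wedge\tau_\K}\!\!\langle\mathcal Y(s,\{r\},\cdot),f\rangle\,ds\Big]\le \E[\langle Y(0),g\rangle]+h(0)\,\E\Big[\int_0^{T\wedge\tau_\K}\!\!\sum_{r_i(s)=\overline r}\!\tauxinter_{\overline r}(a_i(s),X(s))\,ds\Big]+h(0)\,\ind_{r=M}\,\Gamma,
\]
where $\Gamma=\E[\int_0^{T\wedge\tau_\K}\sum_i\tauxnaissance_{r_i}(a_i)\,ds]$. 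The initial term is at most a constant times $h(0)$ since initial ages are bounded by $a_0$. For the birth term I would dominate the total number of predators $N(s):=Y(s,\mathcal U,\{S,M\},\R_+)$: its generator is $\sum_i(\tauxnaissance_{r_i}-\tauxmort_{r_i})\le \bar{\tauxnaissance}\,N$, so by Gronwall $\E[N(s\wedge\tau_\K)]\le\E[N(0)]\,e^{\bar{\tauxnaissance}s}$ (the same Yule domination as in Proposition~\ref{prop:existe}), whence $\Gamma\le \bar{\tauxnaissance}\int_0^T\E[N(s\wedge\tau_\K)]\,ds\le C(1+T)e^{\bar{\tauxnaissance}T}$.

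The genuine obstacle is the middle term, the expected influx into age $0$ of status $r$, i.e.\ the local time of the renewals; since $\tauxinter_{\overline r}$ is unbounded it cannot be dominated by (rate)$\times$(population). Here I would invoke the lower bound on the mean interaction time in Assumption~\ref{hyp:ci_ws} through a second test function. Writing $\bar{\tauxmort}:=\sup_{r,a}\tauxmort_r(a)<\infty$, let
\[
w(a)=\int_a^{a_\infty}\exp\!\Big(-\!\int_a^v \overline{\tauxinter}_r(u,\K)\,du\Big)\,dv ,
\]
so that $w\ge 0$, $w'(a)=\overline{\tauxinter}_r(a,\K)w(a)-1$ and $w(0)=\int_0^{a_\infty}e^{-\int_0^v\overline{\tauxinter}_r(u,\K)\,du}\,dv>0$ is a lower bound on the mean sojourn time in status $r$. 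Testing $Y$ against $\ind_{r'=r}\,w(a)$ and using $\tauxinter_r(a,X)\le\overline{\tauxinter}_r(a,\K)$ before $\tau_\K$, the contribution of predators in status $r$ is bounded below by $-(1+\bar{\tauxmort}\|w\|_\infty)$ per predator, while those in $\overline r$ entering $r$ contribute exactly $w(0)\,\tauxinter_{\overline r}(a_i,X)$; dropping the nonnegative birth term, Dynkin then gives
\[
w(0)\,\E\Big[\int_0^{T\wedge\tau_\K}\!\!\sum_{r_i=\overline r}\tauxinter_{\overline r}(a_i,X)\,ds\Big]\le \|w\|_\infty\,\E[N(T\wedge\tau_\K)]+(1+\bar{\tauxmort}\|w\|_\infty)\int_0^T\E[N(s\wedge\tau_\K)]\,ds .
\]
Dividing by $w(0)>0$ bounds the renewal influx by $C(1+T)e^{\bar{\tauxnaissance}T}$; heuristically this says that each sojourn in status $r$ lasts in mean at least $w(0)$, so the number of renewals in a window is of order (length)$\times$(population)$/w(0)$. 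Collecting the three estimates, recalling $h(0)=\int_0^{a_\infty}f(v)e^{-\frac12\int_0^v\underline{\tauxinter}_r(u,\K)\,du}\,dv$, and absorbing $\E[N(0)]$, $a_0$, $\bar{\tauxmort}$ and $w(0)^{-1}$ into $C$ gives the claimed bound; the remaining care concerns the rigorous use of Dynkin (localization so that the martingale parts have zero mean, and boundedness of $w$ on the range $[0,a_0+T]$ of ages actually reached before $T\wedge\tau_\K$).
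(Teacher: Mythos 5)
Your strategy is genuinely different from the paper's. The paper never touches the generator here: it discretizes the age axis into levels $[a_n,a_{n+1})$, bounds the number of visits of each predator to a level by (births at that level) plus (time spent at the previous level divided by the level width), iterates the resulting recursion, and passes to the continuum limit; the $1/2$ in the exponent comes from the elementary bound on the per-level survival probabilities. Your route through Dynkin's formula is legitimate in principle (it is not circular, since only the local-martingale property from the SDE plus localization at jump times and monotone convergence are needed, not the square-integrability part of Lemma~\ref{lem:mart-sansscaling}, which is proved \emph{from} the present lemma), and your first weight $h$, the reduction of the problem to the expected renewal influx, and the Yule/Gronwall control of the birth term are all sound, modulo proving the bound first for compactly supported $f$ so that $h$ is bounded.

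The gap is in the second weight. You need $w(a)=\int_a^{a_\infty}\exp(-\int_a^v\overline{\tauxinter}_r(u,\K)\,du)\,dv$ to be finite and bounded on the relevant age range, and Assumption~\ref{hyp:ci_ws} guarantees neither: it bounds mean interaction times from \emph{below} only. If $\tauxinter_r(a,x)=1/(1+a)$ (infinite mean, allowed by all hypotheses), then $w\equiv+\infty$ and the test function is vacuous; for the Pareto law listed explicitly in Section~\ref{se:model}, $w(a)$ grows linearly in $a$. Your fallback --- boundedness of $w$ on the reachable range $[0,a_0+T]$ --- then produces a constant of order $(1+T)\sup_{[0,a_0+T]}w$, e.g.\ $(1+T)^2$ for Pareto tails, instead of the claimed $C(1+T)e^{\bar{\tauxnaissance} T}$. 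This is not cosmetic: Lemma~\ref{lem:boundtightness-renormalise} is obtained by applying the present lemma to the unscaled process with horizon $\lambda_K T\to\infty$, birth rates $\lambda_K^{-1}\tauxnaissance_r$ and initial population of order $K_2$, and precisely one factor linear in the horizon is absorbed by the $1/(\lambda_K K_2)$ normalization; any super-linear growth makes the scaled bound explode as $K\to\infty$. The repair within your scheme is to discount the sojourn weight: take $w(a)=\int_a^{a_\infty}e^{-(v-a)}\exp(-\int_a^v\overline{\tauxinter}_r(u,\K)\,du)\,dv$, so that $0\le w\le 1$, $w(0)>0$, and $w'=(1+\overline{\tauxinter}_r)w-1$, whence the per-predator drift in status $r$ is bounded below by $-(1+\bar{\tauxmort})$ before $\tau_\K$; this restores the $(1+T)e^{\bar{\tauxnaissance} T}$ form. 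Even after this repair, note that your constant carries the factor $1/w(0)$ and therefore depends on $\K$ (e.g.\ $1/w(0)\sim c\K$ when $\tauxinter_S(a,x)=cx$), whereas the statement asserts --- and the paper's counting argument delivers --- a $C$ uniform in $\K$; this weaker $\K$-dependent version happens to suffice for every application in the paper, where $\K$ is fixed, but it is not the lemma as written.
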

\begin{proof}
Fix $T\geq 0$ and 
 consider an increasing sequence $a_n$, where $a_{n+1}=a_n+t_n$
 and $(t_n)_n$ is a decreasing sequence of positive numbers fixed hereafter. For a predator $i\in \mathcal U$, a status $r\in \{R,M\}$ and a level $n\in\N$, we set
$$
\pfff^{i,r}_{n} = \mathbb{E}\left[\int_0^{\tau_{\K}\wedge T} \mathbf{1}_{\{i\in \mathcal P(s), \, r_i(s)=r, \, a_i(s)\in [a_n, a_{n+1})\}} ds\right].
$$
 It is equal to the cumulative time spent by  predator $i$, in status $r$ and between ages  $a_n$ and $a_{n+1}$. Let also
$$
N_{n}^{i,r}=\sum_{s\leq \tau_{\K}\wedge T} \mathbf{1}_{\{i\in \mathcal P(s), \, r_i(s)=r, \, a_i(s) =a_n\}}
$$
be the number of times that predator $i\in \mathcal U$ reaches age $a_n$ while it is in status $r$. In other words, writing $b(i)$  the birth time of individual $i$,  for every $j\in \mathbb{N}$, we can  define iteratively
$$T_{j+1,n}^{i,r} = \inf\{t > T_{j,n}^{i,r}\ | \  i\in \mathcal P(t), \, r_i(t)=r,  \, a_i(t)=a_n\},$$ 
for $j\geq 0$, with $T_{0,n}^{i,r}= b(i)$.
We get  $$
N_{n}^{i,r}= \sum_{j\geq 1} \mathbf{1}_{\{T_{j,n}^{i,r} \leq \tau_{\K}\wedge T\}}.$$ With this notation and writing $T_j=T_{j,n+1}^{i,r}$ for convenience, we have 
\begin{align}
\pfff^{i,r}_{n+1}
&\leq \mathbb{E}\left[\sum_{j=1}^{N_{n+1}^{i,r}} \int_{T_{j}}^{T_{j}+t_{n+1}} \mathbf{1}_{\{i\in \mathcal P(s), \, a_i(s)\in [a_{n+1},a_{n+2}),\,  r_i(s)=r\}}  \mathbf{1}_{\{\forall u\in [0,s]: X(u) \in {\K}\}} ds\right].
\end{align}
Adding that $(t_n)_n$ decreases, the status cannot change twice during time $t_{n+1}$
and come back at level $a_{n+1}$. So  the process does not change at all during this time and for $s\leq t_{n+1}$,
\begin{align*}
 &\mathbb{E}\left[ \mathbf{1}_{\{
 \,  i\in \mathcal P(T_j+s), \, a_i(T_j+s)\in [a_{n+1},a_{n+2}),\,  r_i(T_j+s)=r, \, \forall u \in [0,s]: X(T_j+u)\in \K\} } \, \big\vert \, T_j, \, (X(t))_{t\leq T_j+s}
\right]\\
&\qquad \qquad = \mathbb{E}\left[ e^{-\int_0^s \alpha_r(a_{n+1}+u, X(T_j+u))du} 
 \mathbf{1}_{\{\forall u \in [0,s]:
  X(T_j+u)\in \K\}} \big\vert \, T_j, \, (X(T_j+u))_{u\leq s}
\right]\\
&\qquad \qquad \leq  e^{-\int_0^s \underline{\alpha}_r(a_{n+1}+u, \K)du},
\end{align*}
we get 
\begin{align}
\pfff^{i,r}_{n+1}
&\leq \mathbb{E}\left[\sum_{j=1}^{N_{n+1}^{i,r}} \int_0^{t_{n+1}} e^{-\int_0^s \underline{\alpha}_r(a_{n+1}+u, \K)du} \, ds   \right]
\leq t_n \,  p^{r}_{n} \, \mathbb{E}\left[ N_{n+1}^{i,r} \right],
\end{align}
where 
$$p^{r}_{n}=  \frac{1-e^{-\underline{\tauxinter}_{n}^rt_{n+1}}}{\underline{\tauxinter}_{n}^r t_n}, \quad \underline{\tauxinter}_{n}^r=\inf \{ \tauxinter_r(a,x) : \, a\in [a_{n+1},a_{n+2}], x\in (1/ \K,\K)\}.$$ Besides, as ages increase at rate $1$, either predator $i$ is born at an age between $a_{n+1}$ and $a_{n+2}$ or  it has exactly spent the time $t_n$ at level between ages $[a_n,a_{n+1})$. In any case,  
$$
\mathbb{E}\left[ N_{n+1}^{i,r} \right] \leq \P(A_n^i)+
\frac{\pfff^{i,r}_{n}}{t_n},
$$
where
$$A_n^{i}=\{b(i)\leq T\wedge \tau_K, \, a_i(b(i)) \in [a_{n+1},a_{n+2})\}.$$
Combining these inequalities, we obtain
$$\pfff^{i,r}_{n+1}\leq p^{r}_{n}\pfff^{i,r}_{n}+ t_n p^{r}_{n}\P(A_n^i), $$
which then gives, by induction, 
\begin{align*}
\pfff_{n}^{i,r} 
&\leq \pfff_{0}^{i,r} \prod_{j=0}^{n-1} p^r_{j} + \sum_{k=0}^{n-1}   t_k\P(A_k^i) \prod_{j=k}^{n-1} p^r_{j} .
\end{align*}
Using now $p^r_j \leq \frac{t_{j+1}}{t_j} \left(1 - \frac{\underline{\tauxinter}_{j}^r t_{j+1}}{2} \right)$ and setting
$$S_k^n=  \sum_{j=k}^{n-1} \underline{\tauxinter}_{j}^r t_{j+1}/2, $$
we get $\prod_{j=k}^{n-1} p^r_{j} \leq \frac{t_{n}}{t_k} e^{-S_k^n}$ and then 
\begin{align*}
\pfff_{n}^{i,r} 
&\leq \frac{t_{n}}{t_0} \pfff_{0}^{i,r} e^{-  S_0^n } + t_n \sum_{k=0}^{n-1}   \P(A_k^i)  e^{ -S_k^n} .
\end{align*}
To conclude, for any continuous function $f$, we set $f_n=\sup_{a\in[a_{n},a_{n+1})} f(a)$ to have
\begin{align*}
&\mathbb{E}\left[\int_0^t  f(a_i(s)) \right.  \left.  \mathbf{1}_{i\in \mathcal P(s), r_i(s)=r}ds\right]\nonumber\\
&\quad \leq   \sum_{n \geq 0} f_n \pfff_{n}^{i,r}\nonumber\\
&\quad\leq  \frac{\P(b(i)\leq T\wedge \tau_\K)}{t_0\wedge 1}
\sum_{n \geq 0} f_n t_n e^{- S_0^n }  \left( 
T +
 \sum_{k=0}^{n-1}  \P(A_k^i \ \vert \  b(i)\leq T\wedge \tau_\K) e^{S_0^k }\right),
\end{align*}
since $\pfff_{0}^{i,r}\leq T \, \P(b(i)\leq T\wedge \tau_\K)$. Choose now $t_0<a_\infty$ be a fixed constant and for $n\geq 1$, $t_n=t_n^h$ any positive sequence, depending on a parameter $h$, in such a way
$$
\lim_{h\to 0} \sup_{n\geq 1} t_n^h=0, \quad \lim_{n \to \infty} a_n^h =  \lim_{n \to \infty} \sum_{k=0}^n t_k^h = a_\infty.
$$
For instance, we can choose $t_n^h = h$ when $a_\infty=+\infty$. 
Using the convergence of the Darboux sum
$S_0^k$ to  $\int_{[0,a)} \underline{\tauxinter}_r (u) du/2$ when $a^h_k\rightarrow a$, which comes from the continuity of $\underline{\tauxinter}$ and $f$, we get, by letting $h\to 0$,

 \begin{align*}
&\mathbb{E}\left[\int_0^{\tau_{\K}\wedge T}  f(a_i(s))  1_{\{i\in \mathcal P(s), \, r_i(s)=r\}}ds\right]\\
&\qquad \leq C \mathbb{P}(b(i)\leq T\wedge \tau_{\K}) \int_{[0,a_\infty)} 
f(a)  e^{-\int_{[0,a)} \underline{\tauxinter}_r (u,\K) du/2 } da \\
&\qquad \qquad  \qquad \qquad  \qquad \qquad \times \left[T +  
 \E\left( {\bf 1}_{\{a_i(b(i))\leq a\}} \,\exp\left(\int_0^{a_i(b(i))}  \underline{\tauxinter}_r (u,\K) du/2  \right) \right) \right].
\end{align*}
More precisely, the previous inequality is obtained using uniform continuity when $f$ is compactly supported over $[0,a_\infty)$ and  extended for every function by a truncation argument and Fatou Lemma.\\

Recall that  $a_i(b(i))$ has a compact support in $[0,a_{\infty})$ by Assumption~\ref{hyp:ci_ws} and the fact that newborns have age $0$. So the last term is bounded by a constant.
Summing over all predators $i$ yields the result since
$$
\sum_{i\in \mathcal U}  \mathbb{P}(b(i)\leq T\wedge \tau_{\K}) \leq 
\E\left( \#\{ i\in \mathcal U : b(i)\leq T\wedge \tau_{\K}\}\right)\leq  e^{T \, \overline{\tauxnaissance}} \, \mathbb{E}\left[ \mathcal{Y} (0, \{S,M\}, [0,a_\infty)) \right] 
$$
since $\#\{ i\in \mathcal U : b(i)\leq T\wedge \tau_{\K}\}$ is dominated by a pure birth
process at time $T$, with individual  birth rate $\overline{\tauxnaissance}$.
\end{proof}

We define $F_{g,f}: \mathbb{R}_+ \times \mathfrak{M}(\mathcal X) \rightarrow \R$ by
\begin{equation}
\label{eq:F}
F_{g,f}(x,\mu)=g(x)+\langle\mu,f\rangle,
\end{equation}
where   $g : \R_+\rightarrow \R$ and $f : \mathcal{X} \rightarrow \R$ are measurable and bounded functions. We introduce
 \Bea
 \mathfrak LF_{g,f}(x,\mu)
& =& \tauxnaissance x(g(x+1)-g(x))+\tauxmort x(g(x-1)-g(x))\\
&&+\int_{\mathcal X} \left(\frac{\partial }{\partial a} f(r,a)+\tauxnaissance_r(a) f(M,0) -
\tauxmort_r(a)f(r,a)\right)\mu(dr,da)\\
&& +\int_{\mathcal X} \tauxinter_r(a,x)(1_{r=S}\left(g(x-1)-g(x)\right)+f(\overline{r},0)-f(r,a)) \mu(dr,da).
\Eea
The operator $\mathfrak L$ is the generator of the Markov process $(X(t), \mathcal Y(t))_{t\geq 0}$. 
More precisely, our SDE  representation (\ref{S1}-\ref{S2}) ensures the following classical martingale 
problem.

\begin{lem} 
\label{lem:mart-sansscaling}  Assume that Assumption~\ref{hyp:ci_ws} holds and that for any $\K>0$ and $r\in \{S,M\}$,
\begin{equation}
\label{hyp:alpha_ws}
\int_{[0,a_{\infty})} \overline{\tauxinter}_r(a,\K)  e^{-\int_0^a \underline{\tauxinter}_r(u,\K) du/2 }da <\infty.
\end{equation}
Let  $g : \R_+\rightarrow \R$ be measurable and bounded and $f \in \mathcal C^{1,b}( \mathcal{X})$. Then  $(M(t))_{t\geq 0}$ defined for $t\geq 0$  by
$$
 M(t) = F_{g,f}(X(t),\mathcal Y(t)) - F_{g,f}(X(0),\mathcal Y(0)) - \int_0^t \mathfrak L F_{g,f}(X(s),\mathcal Y(s)) ds
$$
is  a local martingale. Besides $\left( M(t\wedge\tau_\K)\right)_{t\geq 0}$ is a square-integrable martingale and its bracket is given, for all $t\geq 0$, by 
\Bea
&&\langle M \rangle(t\wedge\tau_\K)\\
&&\quad=\int_0^{t\wedge\tau_\K}  \left(\tauxnaissance X(s) (g(X(s)+1)-g(X(s)))^2+\tauxmort X(s)(g(X(s)-1)-g(X(s)))^2\right)ds\\
&&\qquad +\int_0^{t\wedge\tau_\K}  \sum_{i \in \mathcal P(s)} \tauxinteri(s)\left(1_{r_i(s)=S}(g(X(s)-1)-g(X(s)))+(f(\overline{r_i}(s),0)-f(r_i(s),a_i(s))\right)^2ds\\
&&\qquad +\int_0^{t\wedge\tau_\K}  \sum_{i \in \mathcal P(s)} \left(\tauxnaissancei(s)f(M,0)^2+\tauxmorti(s) f(r_i(s),a_i(s))^2\right) ds.
\Eea
\end{lem}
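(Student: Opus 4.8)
The plan is to read the announced decomposition directly off the trajectorial representation (\ref{S1}-\ref{S2}), by writing the semimartingale dynamics of $F_{g,f}(X(t),\mathcal Y(t))=g(X(t))+\langle\mathcal Y(t),f\rangle$ and then compensating the Poisson integrals. Since $X$ is a pure jump process, the dynamics of $g(X(t))$ follow from (\ref{S1}) by replacing each unit jump of $X$ by the corresponding increment of $g$: a predation or a prey death turns into $g(X-1)-g(X)$ and a prey birth into $g(X+1)-g(X)$. For $\langle\mathcal Y(t),f\rangle$ I apply (\ref{S2}) to the test function on $\mathcal U\times\mathcal X$ obtained by lifting $f$ so that it does not depend on the label $i$; then $Df(i,s)=f(\overline{r_i}(s),0)-f(r_i(s),a_i(s))$ and $\Delta f(i,r,a)=f(M,0)$, which are exactly the increments appearing in $\mathfrak L$. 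Summing the two and writing each Poisson measure as its compensated version plus its Lebesgue intensity, the resulting drift—namely the absolutely continuous ageing term already present in (\ref{S2}), which after $\sum_{i\in\mathcal P(s)}h(r_i(s),a_i(s))=\int_{\mathcal X}h\,d\mathcal Y(s)$ becomes $\int_{\mathcal X}\partial_a f\,d\mathcal Y$, together with the intensities of the compensated measures—is precisely $\int_0^t\mathfrak L F_{g,f}(X(s),\mathcal Y(s))\,ds$. This identifies $M$ with a sum of stochastic integrals against the compensated measures $\widetilde{\mathcal N}^i,\widetilde{\mathcal Q}^i,\widetilde{\mathcal Q}$, hence as a local martingale (one localizes along the successive jump times, which do not accumulate by Proposition~\ref{prop:existe}, or directly along $\tau_\K$).

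The only genuine difficulty is integrability, caused by the fact that the interaction rates $\alpha_r$ need not be bounded. All other ingredients are harmless: $g$ and $f$ are bounded, the prey and predator birth/death rates are bounded by Assumption~\ref{hyp:ci_ws}, $X(s)\leq\K$ before $\tau_\K$, and the number of living predators is dominated by a Yule process of individual rate $\bar{\tauxnaissance}$, so that $\mathbb E[\int_0^{t\wedge\tau_\K}\#\mathcal P(s)\,ds]<\infty$. The term to tame is the predation one; before $\tau_\K$ one has $\tauxinteri(s)=\alpha_{r_i(s)}(a_i(s),X(s))\leq\overline{\alpha}_{r_i(s)}(a_i(s),\K)$, and applying Lemma~\ref{lem:boundtightness-pasrenormalise} to the continuous nonnegative function $a\mapsto\overline{\alpha}_r(a,\K)$ gives
$$\mathbb E\!\left[\int_0^{t\wedge\tau_\K}\!\!\int_{\mathcal X}\alpha_r(a,X(s))\,\mathcal Y(s,dr,da)\,ds\right]\leq C(1+t)e^{\bar{\tauxnaissance}t}\sum_{r\in\{S,M\}}\int_{[0,a_\infty)}\overline{\alpha}_r(a,\K)\,e^{-\int_0^a\underline{\alpha}_r(u,\K)\,du/2}\,da,$$
which is finite exactly by hypothesis (\ref{hyp:alpha_ws}). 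This yields $\mathbb E[\int_0^{t\wedge\tau_\K}|\mathfrak L F_{g,f}|\,ds]<\infty$ and, combined with the bracket bound below, upgrades the stopped process $M(\cdot\wedge\tau_\K)$ from a local to a true square-integrable martingale.

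It then remains to compute the bracket. Because the families $(\mathcal N^i)_i$, $(\mathcal Q^i)_i$ and $\mathcal Q$ are mutually independent, $M(\cdot\wedge\tau_\K)$ splits into orthogonal compensated integrals and the brackets add; for each mechanism the predictable quadratic variation is the compensator of $\sum_{s\leq\cdot}(\Delta M_s)^2$, i.e. the integral of (jump rate)$\times$(jump size)$^2$. The jump sizes are read off from the construction: a firing of $\mathcal N^i$ at rate $\tauxinteri(s)$ produces $\mathbf 1_{r_i(s)=S}(g(X(s)-1)-g(X(s)))+f(\overline{r_i}(s),0)-f(r_i(s),a_i(s))$; a prey birth/death from $\mathcal Q$ at rates $\tauxnaissance X(s)$, $\tauxmort X(s)$ produces $g(X(s)\pm1)-g(X(s))$; a predator birth/death from $\mathcal Q^i$ at rates $\tauxnaissancei(s)$, $\tauxmorti(s)$ produces $f(M,0)$, resp. $-f(r_i(s),a_i(s))$. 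Squaring, integrating against these intensities and summing over $i\in\mathcal P(s)$ gives exactly the three lines of the stated $\langle M\rangle(t\wedge\tau_\K)$; each is integrable by the previous paragraph (the $\mathcal N^i$-line uses boundedness of the increments of $g,f$ together with the displayed control of $\int\int\alpha_r\,d\mathcal Y\,ds$, the other two use $X\leq\K$ and the Yule domination), so that $\mathbb E[\langle M\rangle(t\wedge\tau_\K)]<\infty$ confirms square-integrability. The main obstacle throughout is precisely this taming of the unbounded $\alpha_r$ through Lemma~\ref{lem:boundtightness-pasrenormalise} and (\ref{hyp:alpha_ws}); once that single estimate is secured, the remainder is the standard compensation-and-bracket computation for jump stochastic differential equations.
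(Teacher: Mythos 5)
Your proof is correct and takes essentially the same route as the paper: the local martingale property and the bracket are read off the SDE representation (\ref{S1})--(\ref{S2}) by compensating the Poisson measures, and the one genuine difficulty --- the unboundedness of the interaction rates --- is handled exactly as in the paper, by applying Lemma~\ref{lem:boundtightness-pasrenormalise} to $a\mapsto\overline{\tauxinter}_r(a,\K)$ and invoking hypothesis \eqref{hyp:alpha_ws} to get integrability of the drift and of $\langle M\rangle(t\wedge\tau_\K)$, hence the true square-integrable martingale property of the stopped process. The paper only sketches the decomposition (giving details for the $X$-component) and states the bracket; your write-up fills in the same computation explicitly.
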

\begin{proof} The fact that $M$ is a local martingale and the computation of its square variation 
 is derived from our SDE representation  (\ref{S1}-\ref{S2}) . Indeed one can 
write the semi-martingale decomposition of $F_{g,f}(X,\mathcal Y)$ using the Poisson point measure and its compensation, see   \cite{IW14}. We only give details for the first component $X$:
 \begin{align*}
g(X(t))&= g(X(0))
+\int_0^t  \sum_{\substack{i\in \mathcal P(s-) \\ r_i(s-)=S}}   \tauxinteri(s-)(g(X(s-)-1)-g(X(s-)) \, ds \nonumber  \\
&\quad +\int_0^t  \sum_{\substack{i\in \mathcal P(s-) \\ r_i(s-)=S}}  \int_{\mathbb{R}_+}\mathbf{1}_{\{u\leq \tauxinteri(s-)\}} (g(X(s-)-1)-g(X(s-)) \, \widetilde{\mathcal N}^i(ds,  du)\nonumber \\
& \quad +\int_0^t  \left(\tauxnaissance X(s-)((g(X(s-)+1)-g(X(s-)) +
 \tauxmort X(s-) (g(X(s-)-1)-g(X(s-))\right)\, ds\\
&\quad +\int_0^t \int_{\mathbb{R}_+}  \Big(\mathbf{1}_{\{u\leq \tauxnaissance X(s-)\}}((g(X(s-)+1)-g(X(s-)) \\
& \qquad \qquad \qquad \qquad     +\mathbf{1}_{\{0 < u- \tauxnaissance X(s-)\leq \tauxmort X(s-)\}} (g(X(s-)-1)-g(X(s-))\Big)\, \widetilde{\mathcal Q}(ds,  du), \label{S1}
\end{align*}
where $\widetilde{\mathcal N}^i$ and  $\widetilde{\mathcal Q}$ are the compensated measures of $\mathcal N^i$ and $\mathcal Q$. 

Finally,  square integrability of $(M(t\wedge\tau_\K))_{t\geq 0}$ is a 
consequence of Lemma~\ref{lem:boundtightness-pasrenormalise} applied to
$f=\overline{\alpha}_r$ and Doob's inequality and our integrability assumption \eqref{hyp:alpha_ws}. 
\end{proof}
\section{Scaling and averaging}
\label{se:scaling}

\subsection{Approximation of the scaled process by a dynamical system}

Let us now introduce our scaling parameters $K=(K_1, K_2) \in (0,+\infty)^2$  respectively for the size of the populations of  preys and the predators. These sizes are going to infinity. Besides, in our scaling, 
$$\lambda_K=\frac{K_1}{K_2}$$
tends to infinity.  The intial number of preys and predators satisfy
$$X^K(0)= \lfloor K_1 x_0\rfloor, \qquad  Y^K( 0,\mathcal U, \{S,M\}, [0,a_\infty))=\lfloor K_2 y_0\rfloor,$$ for some constants $x_0,y_0>0$. 
The rates for interactions are now density-dependent (rather that population-size-dependent) and we set
$$
\tauxinter_r^K(a,x)= \tauxinter_r(a,x/K_1)
$$
for $r\in \{S,M\}$, where $\tauxinter_r$ is measurable on $[0,a_{\infty})\times \R_+$. Interactions occur at a fast time scale which arises here  through an acceleration of time. Birth and death of preys and predators (but the deaths of preys due to predation) occur at a slower time scale and we set

$$\tauxmort_r^K( a) =\lambda_K^{-1}\tauxmort_r(a), \quad \tauxnaissance_r^{K}(a)=\lambda_K^{-1}\tauxnaissance_r(a),  \qquad  \tauxmort^K=\lambda_K^{-1}\tauxmort, \quad \tauxnaissance^{K}=\lambda_K^{-1}\tauxnaissance,$$
where  $\tauxmort_r^K$ and  $\tauxnaissance_r^K$  are non-negative, measurable and bounded functions and $\tauxmort,\tauxnaissance$ are non-negative numbers. See Section~\ref{sect:discussion} for a discussion on our scaling.
\begin{assumption}
\label{hyp:ci}
There exists $a_0 \in (0,a_\infty)$ such that $\mathcal Y^K(0,\{S,M\},[a_0,\infty))=0$ a.s. for all $K\geq 1$. 
Besides, for any $r\in \{S,M\}$ and $\K>0$,
$$\inf_{x\in [0, \K]}\E(T_r(x)) >0, \qquad \sup_{a\in[0,a_{\infty})} \tauxnaissance_{r}(a)+\tauxmort_{r}(a) <\infty $$
and $a\rightarrow \alpha_r(a,x)$ is continuous on $[0,a_{\infty})$ for any $x\in \R_+$.
\end{assumption}
Under this assumption, for each $K=(K_1,K_2)\in (0,+\infty)^2$, Proposition~\ref{prop:existe} ensures existence and strong uniqueness of the solution $Z^K=(X^K, Y^K)$ of the system of stochastic differential equations (\ref{S1}-\ref{S2}) with parameters 
$\tauxinter_r^K,\tauxnaissance^K_r,\tauxmort^K_r,\tauxnaissance^K,\tauxmort^K$ given above. We consider the accelerated and scaled process defined, for all $t\geq 0$, by
$$\mathcal Z^K(t)=(\Xi^K (t),\mathcal Y^K(t,dr,da))=\left(\frac{1}{K_1}X^K(\lambda_Kt), \frac{1}{K_2}Y^K(\lambda_K t,\mathcal U, dr,da),  \right).$$
For every $T>0$, Process $(\mathcal Z^K(t), t\in [0,T])$ belongs to the space $\mathbb{D}([0,T], \mathbb{R}_+) \times \mathfrak{M}([0,T] \times \mathcal X)$. Space $\mathbb{D}([0,T], \mathbb{R}_+)$ is the classical Skorokhod space with its usual topology; see for instance \cite{B13}. Space $\mathfrak{M}([0,T] \times \mathcal X)$ is the space of finite positive measures on $[0,T] \times \{S,M\} \times [0,a_\infty)$ embedded with  narrow convergence.\\

Our result relies  on the following assumption on the interaction rates. It is slightly  stronger than \eqref{hyp:alpha_ws} and is involved in tightness proof to localize age in compact sets of $[0,a_{\infty})$.\\
Analogously to the un-scaled setting, we set for $r\in \{S,M\}$ and $\K>0$,
$$\underline{\tauxinter}_r(a,\K)=\inf_{x\in (-1/\K,\K)} \tauxinter_r(a,x), \quad \overline{\tauxinter}_r(a,\K)=\sup_{x\in (-1/\K,\K)} \tauxinter_r(a,x), $$
and 
$$\bar{\tauxnaissance}=\sup_{r\in \{S,M\}, \, a\in[0,a_{\infty})} \tauxnaissance_{r}(a).$$

\begin{assumption} 
For any $\K>0$, there exists a continuous function $\lyap : [0,a_{\infty})\rightarrow [1,\infty)$ such that 
$
\lim_{a\rightarrow a_{\infty}} \lyap(a) =+\infty$
 and for $r\in \{S,M\}$,
\label{hyp:alpha}
$$\int_ {[0,a_{\infty})}  \,  \lyap(a) \, (1+\overline{\tauxinter}_r(a,\K)) e^{-\int_{[0,a)}  \underline{\tauxinter}_r(s,\K) \, ds /2} da <\infty.$$
\end{assumption}
For $r\in \{S,M\}$, we write for convience
\begin{equation}
\label{eq:pr}
p_r(x,a)=e^{-\int_0^a \tauxinter_r(x,u)du}
\end{equation}
 the cumulative distribution of the interaction times.  We define
\begin{equation}
\label{eq:phi}
\phi(x)=  \frac{1}{\int_{[0,a_{\infty})} (p_S(x,a) + p_M(x,a))  da}
\end{equation}
and
\begin{equation}
\label{eq:psi}
\psi(x)=\phi(x)\, \int_{[0,a_{\infty})}\left(\tauxmort_S(a){p_S(x,a)}+\tauxmort_M(a)p_M(x,a)  \right)da.
\end{equation}
Let us refer to Equation~\eqref{eq:phi-intro} and \eqref{eq:psi-intro} in introduction for an expression of $\phi$ and $\psi$ in terms of the random variables $T_{r}$ and the demographic rates. 
Our last assumption concerns uniqueness of the limiting equation and the fact that the limit does not reach a boundary. For simplicity, we also assume here existence, but the limiting procedure we prove would ensure existence up to this time when the process get close to the boundary.  

\begin{assumption}
\label{eq:systeme-edo}
The following system of ordinary differential equations,

\begin{equation}
\label{ODElimit}
\left\{
\begin{array}{rcl}
x'(t)&=&(\gamma-\beta) x(t)-y(t)\phi(x(t)),\\
y'(t)&=&y(t)\psi(x(t)),
\end{array}
\right.
\end{equation}
admits a unique global solution $(x,y) \in \mathcal C^1(\R_+, (\mathbb{R}_+^*)^2)$ such that $(x(0),y(0))=(x_0,y_0)$.
\end{assumption}
The preceding assumption holds under classical regularity
assumption and in particular, if $\phi$ and $\psi$ are globally Lipschitz. Locally Lipschitz conditions are also sufficient when the system does not explode in finite time. That is enough for our purpose.
Our main result can be stated as follows.

\begin{thm}
\label{thm:main} 
Under Assumptions~\ref{hyp:ci},~\ref{hyp:alpha} and ~\ref{eq:systeme-edo}, for every $T>0$, the two following assertions hold : 
\begin{itemize}
\item[i)] the process $(\Xi^K(t),\mathcal Y^K(t,\{S,M\},[0,a_\infty))_{t\in [0,T]}$ converges in law to  $(x(t),y(t))_{t\in [0,T]}$ in $\mathbb D([0,T],\R_+^2)$,
\item[ii)] for each  $r\in \{S,M\}$, the measure $\mathcal Y^K(t, \{r\}, da)dt$ converges in law  to the measure 
$$y_r(dt,da)=y(t)p_r(x(t),a) \phi(x(t)) \, dt\, da$$
 in  the space $\mathfrak{M}([0,T]\times [0,a_{\infty}))$.
\end{itemize}
\end{thm}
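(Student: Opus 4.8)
The plan is to follow the occupation-measure approach to averaging of Kurtz, as developed in \cite{KKP,K92}, adapted to the measure-valued (infinite-dimensional) fast component. The starting point is the martingale problem of Lemma~\ref{lem:mart-sansscaling} applied to the accelerated and renormalized process $\mathcal Z^K=(\Xi^K,\mathcal Y^K)$. After the time acceleration by $\lambda_K$ and the space renormalizations by $K_1$ (preys) and $K_2$ (predators), the generator of $\langle\mathcal Y^K,f\rangle$ splits into a \emph{fast} part of order $\lambda_K$, namely the aging-plus-interaction operator $\langle\mathcal Y^K,\partial_a f+\tauxinter_r(\cdot,\Xi^K)(f(\overline{r},0)-f(r,\cdot))\rangle$, and a \emph{slow} part of order one coming from the scaled rates $\lambda_K^{-1}\tauxnaissance_r,\lambda_K^{-1}\tauxmort_r$. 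Crucially, for $f\equiv 1$ the fast part vanishes, so the total predator mass $y^K:=\mathcal Y^K(t,\{S,M\},[0,a_\infty))$ evolves only through the slow terms; likewise, since $\lambda_K K_2/K_1=1$, the predation drift on $\Xi^K$ is of order one. I introduce the occupation measure $\Gamma^K(ds,dr,da)=\mathcal Y^K(s,dr,da)\,ds$ on $[0,T]\times\mathcal X$ and aim to show that $(\Xi^K,y^K,\Gamma^K)$ is tight and that any limit point solves \eqref{ODElimit} together with the announced description of $\Gamma$.

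First I would prove tightness. For the real-valued components $\Xi^K$ and $y^K$, whose jumps are $O(1/K_1)$ and $O(1/K_2)$ respectively, I would use the Aldous--Rebolledo criterion: the drifts are bounded on $[0,\tau_\K]$ by Assumption~\ref{hyp:ci} and the identity $\int\tauxinter_r\, p_r\,da=1$, while the predictable brackets computed in Lemma~\ref{lem:mart-sansscaling} are $O(1/K_1)$ and $O(1/K_2)$ after scaling and hence vanish; this also forces all limit points to be continuous. For the occupation measure, tightness in $\mathfrak M([0,T]\times\mathcal X)$ reduces to a uniform mass bound and a no-escape control of the age towards $a_\infty$. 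The mass is controlled through the slow birth bound $\overline{\tauxnaissance}$, and the key no-escape estimate is obtained by applying the scaled analogue of Lemma~\ref{lem:boundtightness-pasrenormalise} to the Lyapunov function $f=\lyap$ of Assumption~\ref{hyp:alpha}: this yields $\sup_K\E[\int_0^{T\wedge\tau_\K}\langle\mathcal Y^K(s),\lyap\rangle\,ds]<\infty$, so that by Markov's inequality and $\lyap\to\infty$ at $a_\infty$ the mass that $\Gamma^K$ puts on large ages is uniformly small. I expect \textbf{this tightness of the occupation measure, with the age localized away from $a_\infty$ despite possibly unbounded interaction rates, to be the main obstacle}, since it is exactly where the infinite-dimensionality and the sub-exponential tails of the interaction times come into play.

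Next I would identify the limit along a convergent subsequence, writing $(\Xi,y,\Gamma)$ for a limit point. From the martingale problem applied to $F_{0,f}$ with $f\in\mathcal C^{1,b}(\mathcal X)$, dividing by $\lambda_K$ and letting $K\to\infty$ kills the left-hand increment (which is $O(\lambda_K^{-1})$ as $\langle\mathcal Y^K,f\rangle\le\|f\|_\infty\,y^K$ is bounded), the slow term (of relative order $\lambda_K^{-1}$) and the rescaled martingale, whose bracket is $O(1/K_1)\to 0$. What survives is the fast operator integrated against $\Gamma^K$, so that in the limit $\int_{[0,t]\times\mathcal X}\big(\partial_a f(r,a)+\tauxinter_r(a,\Xi_s)(f(\overline{r},0)-f(r,a))\big)\,\Gamma(ds,dr,da)=0$ for all $t$ and all such $f$. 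The time-marginal of $\Gamma$ is absolutely continuous with density the mass limit $y$, which allows the disintegration $\Gamma(ds,dr,da)=\gamma_s(dr,da)\,ds$; the identity above then says that for almost every $s$ the measure $\gamma_s$ is annihilated by the adjoint of the fast generator at prey density $\Xi_s$. Solving this stationary renewal/transport equation, $\partial_a m_r+\tauxinter_r(\cdot,\Xi_s)m_r=0$ with injection $m_r(0)=\int\tauxinter_{\overline{r}}(\cdot,\Xi_s)m_{\overline{r}}$, gives $m_r(a)=m_r(0)\,p_r(\Xi_s,a)$ with $m_S(0)=m_M(0)$, i.e. $\gamma_s(dr,da)=c_s\,p_r(\Xi_s,a)\,da$ for a single constant $c_s$; matching the total mass $\Gamma([0,t]\times\mathcal X)=\int_0^t y(s)\,ds$ fixes $c_s=y(s)\phi(\Xi_s)$ and yields assertion (ii). The delicate point is passing to the limit in the nonlinear, possibly unbounded term $\tauxinter_r(a,\Xi^K_s)$, which I would handle by joint convergence of $(\Xi^K,\Gamma^K)$, continuity of $\tauxinter_r$ in its second variable, and truncation in $a$ controlled by the $\lyap$-bound.

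Finally, I would close the slow equations. Plugging the identified $\gamma_s$ into the order-one drifts of $\Xi^K$ and $y^K$ and using $\int\tauxinter_r\,p_r\,da=1$ together with \eqref{eq:phi}--\eqref{eq:psi}, the limiting drifts become $(\tauxnaissance-\tauxmort)\Xi_s-y(s)\phi(\Xi_s)$ and $y(s)\psi(\Xi_s)$, so $(\Xi,y)$ solves \eqref{ODElimit}. By Assumption~\ref{eq:systeme-edo} this solution is unique and stays in $(\R_+^*)^2$, which both removes the localization at $\tau_\K$ (the limit never reaches the boundary, so $\tau_\K\to\infty$ for $\K$ large) and shows that every subsequential limit coincides with $(x,y)$; hence the full sequence converges, giving assertion (i). Since the limit is deterministic, convergence in law here also upgrades to convergence in probability.
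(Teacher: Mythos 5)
Your proposal is correct and follows essentially the same route as the paper's proof: the mean occupation measure of Kurtz--Popovic, tightness of $(\Xi^K,y^K,\Gamma^K)$ via Aldous--Rebolledo combined with the Lyapunov bound of Assumption~\ref{hyp:alpha} and the scaled moment estimate of Lemma~\ref{lem:boundtightness-renormalise}, identification of the fast age distribution as the stationary solution of the transport/renewal equation, and closure of the slow system with Assumption~\ref{eq:systeme-edo} giving uniqueness and removing the localization. The only presentational differences are that the paper disintegrates $\Gamma_{\K_0}(ds,\{r\},da)=\gamma_{\K_0}(s,\{r\},da)\Lambda_r(ds)$ against a general measure (Kurtz's Lemma 1.4) rather than Lebesgue, implements your ``adjoint annihilation'' step rigorously through an explicit Poisson-equation test function $f=p_S^{-1}\int_0^\cdot g\,p_S$ (since $\gamma_{\K_0}(s,\{r\},\cdot)$ is a priori only a measure, not a density), and notes that the hitting times $\tau^K_{\K}$ converge only for all but countably many levels $\K<\K_0$.
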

The fact that convergence of $\mathcal Y^K(t,dr,da)$  hold on the associated Skorokhod space is left open.

\subsection{Proofs}

The proof is based on standard tightness and uniqueness arguments involving the occupation measures and averaging \cite{K92,KKP} and localization. The main new difficulties lie in the infinite dimension in the averaging procedure due to the age structure 
combined with unboundedness of the interactions rates $\tauxinter_{r}$ inherent in our framework. \\

First, 
Lemma~\ref{lem:boundtightness-pasrenormalise} above directly implies the following counterpart for the scaled process. It allows us to localize the age distribution. We set
$$\tau_{\K}^K=\inf\{ t\geq 0 : \Xi_t^K \not\in (1/\K,\K)\}.$$
\begin{lem}
\label{lem:boundtightness-renormalise}
Under Assumption~\ref{hyp:ci}, there exists $C>0$ such that for any continuous function $f$
on $[0,a_{\infty})$  and $r\in \{S,M\}$ and $\K>0$,
\begin{align*}
\mathbb{E}\left[\int_0^{T  \wedge \tau_{{\K}}^K} \int_{[0,a_{\infty})} f(a) \mathcal Y^K(s,\{r\},da)  ds \right] &\leq C (1+T)e^{\bar{\tauxnaissance}_r T}   \int_{[0,a_{\infty})} f(a)  e^{-\int_{[0,a)} \underline{\tauxinter}_r(u,\K) du/2 } da.
 \end{align*}
\end{lem}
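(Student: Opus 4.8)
The plan is to deduce the scaled estimate from Lemma~\ref{lem:boundtightness-pasrenormalise} by a deterministic time change and a spatial normalisation, applied to $Z^K=(X^K,Y^K)$ itself, which solves the system (\ref{S1}-\ref{S2}) with the scaled parameters $\tauxinter_r^K,\tauxnaissance_r^K,\tauxmort_r^K,\tauxnaissance^K,\tauxmort^K$. First I would record the definitional identities $\mathcal Y^K(s,\{r\},da)=\tfrac{1}{K_2}\,Y^K(\lambda_K s,\mathcal U,\{r\},da)$ and $\Xi^K_s=\tfrac{1}{K_1}X^K(\lambda_K s)$, so that $\{\Xi^K_s\in(1/\K,\K)\}=\{X^K(\lambda_K s)\in(K_1/\K,K_1\K)\}$ and hence $\lambda_K\tau^K_\K=\sigma_\K:=\inf\{u\ge 0:\ X^K(u)\notin(K_1/\K,K_1\K)\}$. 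Substituting these into the left-hand side and performing the change of variable $u=\lambda_K s$ gives
\[
\mathbb{E}\!\left[\int_0^{T\wedge\tau^K_\K}\!\!\int_{[0,a_\infty)} f(a)\,\mathcal Y^K(s,\{r\},da)\,ds\right]
=\frac{1}{K_2\lambda_K}\,\mathbb{E}\!\left[\int_0^{\lambda_K T\wedge\sigma_\K}\!\!\int_{[0,a_\infty)} f(a)\,Y^K(u,\mathcal U,\{r\},da)\,du\right].
\]

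Next I would apply Lemma~\ref{lem:boundtightness-pasrenormalise} to $Z^K$ with horizon $\lambda_K T$ and exit time $\sigma_\K$, matching the parameters under scaling. Since $\tauxinter_r^K(a,x)=\tauxinter_r(a,x/K_1)$, the density change $y=x/K_1$ turns the infimum over $X^K\in(K_1/\K,K_1\K)$ into $\inf_{y\in(1/\K,\K)}\tauxinter_r(a,y)=\underline{\tauxinter}_r(a,\K)$, so the exponential weight is exactly the one in the statement; the supremum of the birth rates of $Z^K$ equals $\lambda_K^{-1}\bar{\tauxnaissance}$, whence the growth factor is $\exp(\lambda_K^{-1}\bar{\tauxnaissance}\cdot\lambda_K T)=e^{\bar{\tauxnaissance} T}$, uniformly in $K$. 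Inspecting the proof of Lemma~\ref{lem:boundtightness-pasrenormalise}, the constant enters through the predator count via $\sum_i\mathbb{P}(b(i)\le \lambda_K T\wedge\sigma_\K)\le e^{\bar{\tauxnaissance} T}\,\mathbb{E}[Y^K(0,\mathcal U,\{S,M\},[0,a_\infty))]=e^{\bar{\tauxnaissance} T}\lfloor K_2 y_0\rfloor$, while the remaining initial-age factor $\mathbb{E}[\mathbf{1}_{\{a_i(b(i))\le a\}}\exp(\int_0^{a_i(b(i))}\underline{\tauxinter}_r(u,\K)du/2)]$ is bounded uniformly in $K$ by Assumption~\ref{hyp:ci}, newborn and initial ages lying in the fixed compact $[0,a_0)$. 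Collecting these, the bracketed expectation above is bounded by $C_0\,K_2\,(1+\lambda_K T)\,e^{\bar{\tauxnaissance} T}\int_{[0,a_\infty)} f(a)\,e^{-\int_0^a\underline{\tauxinter}_r(u,\K)du/2}\,da$, with $C_0$ independent of $K$.

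Substituting back, the prefactor $\tfrac{1}{K_2\lambda_K}$ cancels the factor $K_2$ and turns $(1+\lambda_K T)$ into $\tfrac{1}{\lambda_K}+T\le 1+T$, because $\lambda_K\ge 1$; this reproduces exactly the claimed bound $C(1+T)e^{\bar{\tauxnaissance} T}\int_{[0,a_\infty)} f(a)\,e^{-\int_{[0,a)}\underline{\tauxinter}_r(u,\K)du/2}\,da$ with $C=C_0$. Only two points require care. First, Lemma~\ref{lem:boundtightness-pasrenormalise} is stated for the exit time of $(1/\K,\K)$, whereas here I need it for $(K_1/\K,K_1\K)$; since its proof uses only the bounds of the rates over the interval in which $X$ lives and the Yule domination of the predator count, it applies verbatim to the exit time of any compact subinterval of $(0,\infty)$, which I would note explicitly. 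Second, and most importantly, one must verify that every constant produced by that proof is uniform in $K$: the linear growth of the constant in $K_2$ is intended and precisely compensates the $1/K_2$ normalisation, while the $\lambda_K$-dependence enters only through $\lambda_K^{-1}\bar{\tauxnaissance}\cdot\lambda_K T=\bar{\tauxnaissance} T$ and through the harmless ratio $(1+\lambda_K T)/\lambda_K$. Checking this uniformity is the main, though mild, obstacle; everything else is a direct translation through the scaling.
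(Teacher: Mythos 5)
Your proof is correct and takes essentially the same approach as the paper: the change of variable $u=\lambda_K s$, the identification of $\lambda_K\tau^K_{\K}$ as the exit time of $(K_1/\K,K_1\K)$ for $X^K$, and an application of Lemma~\ref{lem:boundtightness-pasrenormalise} with the scaled parameters, noting that $\tauxnaissance_r^K=\lambda_K^{-1}\tauxnaissance_r$ turns $e^{\bar{\tauxnaissance}^K\lambda_K T}$ into $e^{\bar{\tauxnaissance}T}$. The paper states this in three lines; your additional bookkeeping (the constant scaling linearly in $K_2$ against the $1/(K_2\lambda_K)$ prefactor, the bound $(1+\lambda_K T)/\lambda_K\leq 1+T$, and the remark that the unscaled lemma must be read over the interval $(K_1/\K,K_1\K)$ rather than $(1/\K,\K)$) merely makes explicit what the paper leaves implicit.
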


\begin{proof} 
We have
\begin{align*}
&\mathbb{E}\left[\int_0^{T \wedge  \tau^K_{\K}}\int_{[0,a_{\infty})} f(a) \mathcal Y^K(s,\{r\},da)  ds \right] \\
 &\qquad \qquad = \frac{1}{\lambda_K K_2}\mathbb{E}\left[\int_0^{\lambda_K(T \wedge\tau_ {\K}^K)}\int_{[0,a_{\infty})} f(a)  Y^K(s,\{r\},da)  ds  \right].
\end{align*}
Adding that  $\lambda_K \tau_ {\K}^K$ is the exit time of $(K_1/\K, K_1\K)$ for $X^K$ and 
 $Y^K( 0,\mathcal U, \{S,M\}, [0,a_\infty))=\lfloor K_2 y_0\rfloor$ and  $\tauxnaissance_r^{K}(a)=\lambda_K^{-1}\tauxnaissance_r(a)$, the conclusion comes from Lemma~\ref{lem:boundtightness-pasrenormalise}.
\end{proof}

We now give the counterpart of  the martingales  of Lemma~\ref{lem:mart-sansscaling} for the   scaled process.  
Recalling that $
F_{g,f}(x,\mu)=g(x)+\langle\mu,f\rangle$ where   $g : \R_+\rightarrow \R$ is a bounded measurable function and $f\in \mathcal C^{1,b}( \mathcal{X})$, we set
\Bea
&&\mathcal L^KF_{f,g}(x,\mu)= K_1 x\big(\tauxnaissance(g(x+1/K_1)-g(x))+\tauxmort(g(x-1/K_1)-g(x))\big)\\
&&\quad +\int_{\mathcal X} \left(\tauxnaissance_r(a) f(M,0) -
\tauxmort_r(a)f(r,a)\right)\mu(dr,da)\\
&& \quad +\lambda_K \int_{\mathcal X}\left( \frac{\partial }{\partial a} f(r,a)+ \tauxinter_r(a,x)\left(1_{r=S}\left(g(x-1/K_1)-g(x)\right)+f(\overline{r},0)-f(r,a)\right)\right) \mu(dr,da).
\Eea

\begin{lem}
\label{lem:mart-avecscaling} Suppose that Assumptions \ref{hyp:ci} and   \ref{hyp:alpha} hold. Let  $g : \R_+\rightarrow \R$  be a bounded measurable function and $f\in \mathcal C^{1,b} ( \mathcal{X})$. Then the process $M^K$ defined for $t\geq 0$ by
$$
M^K(t) = F_{f,g}(\mathcal Z^K(t)) - F_{f,g}(\mathcal Z^K(0)) - \int_0^t \mathcal L^K F_{f,g}(\mathcal Z^K(s)) ds,
$$
is a local martingale. Besides $(M^K(t\wedge \tau_{\K}^K))_{t\geq 0}$
is a square integrable martingale
and
\Bea
&&\langle M^K\rangle(t\wedge \tau_{\K}^K) \\
&&\quad =\int_0^{t\wedge \tau_{\K}^K}  \Xi^K(s)\left(\tauxnaissance(g(\Xi^K(s)+1/K_1)-g(x))^2+\tauxmort(g(\Xi^K(s)-1/K_1)-g(x))^2\right)ds\\
&&\qquad \, +\lambda_K\int_0^{t\wedge \tau_{\K}^K}  \sum_{i \in \mathcal P(s)} \tauxinter_{r_i(s)}(a_i(s),\Xi^K(s))\\
&&\qquad \qquad  \times \left(1_{r_i(s)=S}(g(\Xi^K(s)-1/K_1)-g(\Xi^K(s)))+\frac{1}{K_2}(f(\overline{r_i}(s),0)-f(r_i(s),a_i(s))\right)^2ds\\
&&\qquad \, +\int_0^{t\wedge \tau_{\K}^K}  \sum_{i \in \mathcal P(s)} \left(\tauxnaissance_{r_i(s)}(a_i(s))\frac{f(M,0)^2}{K_2^2}+\tauxmort_{r_i(s)}(a_i(s)) \frac{f(r_i(s),a_i(s))^2}{K_2^2}\right) ds.
\Eea
\end{lem}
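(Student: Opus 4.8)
The plan is to deduce the statement from its un-scaled counterpart, Lemma~\ref{lem:mart-sansscaling}, through a rescaling of the test functions combined with a deterministic time change, and then to upgrade the resulting local martingale to a square-integrable one on $[0,t\wedge\tau^K_\K]$ by feeding the explicit bracket into the moment estimate of Lemma~\ref{lem:boundtightness-renormalise}. (One could instead redo the It\^o/compensation computation of Lemma~\ref{lem:mart-sansscaling} directly from (\ref{S1}-\ref{S2}) carried with the scaled rates, but the reduction is shorter.) Set $\widetilde g=g(\cdot/K_1)$ and $\widetilde f=f/K_2$. Since $\mathcal Y^K(t,\cdot)=K_2^{-1}Y^K(\lambda_K t,\mathcal U,\cdot)$, one has for every $t\ge 0$
$$
F_{f,g}(\mathcal Z^K(t))=\widetilde g\big(X^K(\lambda_K t)\big)+\big\langle Y^K(\lambda_K t,\mathcal U,\cdot),\widetilde f\big\rangle=F_{\widetilde g,\widetilde f}\big(X^K(\lambda_K t),Y^K(\lambda_K t,\mathcal U,\cdot)\big),
$$
where $(X^K,Y^K)$ is the un-scaled solution of (\ref{S1}-\ref{S2}) run with the rates $\tauxinter^K_r,\tauxnaissance^K_r,\tauxmort^K_r,\tauxnaissance^K,\tauxmort^K$. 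These inherit Assumption~\ref{hyp:ci_ws} and condition \eqref{hyp:alpha_ws} from Assumptions~\ref{hyp:ci} and \ref{hyp:alpha} (the density rescaling $x\mapsto x/K_1$ only shifts the range of $\K$ in the bounds $\underline{\tauxinter}_r,\overline{\tauxinter}_r$, and Assumption~\ref{hyp:alpha} is strictly stronger than \eqref{hyp:alpha_ws}), while $\widetilde g$ is bounded with $\widetilde f\in\mathcal C^{1,b}(\mathcal X)$. Lemma~\ref{lem:mart-sansscaling} thus produces a local martingale $\sigma\mapsto\widetilde M(\sigma)$, built from $F_{\widetilde g,\widetilde f}$, together with its bracket integrand. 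Since a deterministic time change preserves the local martingale property and satisfies $\langle\widetilde M(\lambda_K\cdot)\rangle(t)=\langle\widetilde M\rangle(\lambda_K t)$, the process $M^K(t):=\widetilde M(\lambda_K t)$ is a local martingale.

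\textbf{Identification of the compensator and the bracket.} This step is bookkeeping. Changing variables $\sigma=\lambda_K u$ in $\int_0^{\lambda_K t}\mathfrak L F_{\widetilde g,\widetilde f}\,d\sigma$ brings out a factor $\lambda_K$ in front of the un-scaled generator; inserting $\tauxnaissance^K_r=\lambda_K^{-1}\tauxnaissance_r$, $\tauxmort^K_r=\lambda_K^{-1}\tauxmort_r$, $\tauxnaissance^K=\lambda_K^{-1}\tauxnaissance$, $\tauxmort^K=\lambda_K^{-1}\tauxmort$, $\tauxinter^K_r(a,x)=\tauxinter_r(a,x/K_1)$ and the identity $\lambda_K K_2=K_1$ then matches each term with the displayed $\mathcal L^K$: the factor $\lambda_K$ cancels the $\lambda_K^{-1}$ of the demographic rates (leaving $\mathcal O(1)$ birth/death contributions), it survives on the aging and interaction terms, and the prey jumps acquire the prefactor $K_1=\lambda_K K_2$ coming from the order-$K_2$ predators each weighting a caught prey by $1/K_1$. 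The same substitution applied to the bracket integrand of $\widetilde M$, after $\langle M^K\rangle(t)=\langle\widetilde M\rangle(\lambda_K t)$, yields the three stated integrals; the only care needed is tracking which events are accelerated (interactions, aging) and which are slowed (births, deaths).

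\textbf{From local to square-integrable martingale.} This is the genuine obstacle, since $\tauxinter_r$ may be unbounded. I would prove $\E[\langle M^K\rangle(t\wedge\tau^K_\K)]<\infty$, which via $\E[M^K(t\wedge\tau^K_\K)^2]=\E[\langle M^K\rangle(t\wedge\tau^K_\K)]$ upgrades the stopped local martingale to a true $L^2$ martingale and justifies the bracket formula. As $g$ and $f$ are bounded, each squared increment in the three integrals is bounded by a constant, so only the total intensities matter. On $\{s\le\tau^K_\K\}$ one has $\Xi^K(s)\in(1/\K,\K)$, hence $\tauxinter_{r_i(s)}(a_i(s),\Xi^K(s))\le\overline{\tauxinter}_r(a_i(s),\K)$ and, using $\sum_{i}\tauxinter_{r_i(s)}(a_i(s),\Xi^K(s))=K_2\int\tauxinter_r(a,\Xi^K(s))\,\mathcal Y^K(s,dr,da)$, the interaction integral is dominated by
$$
C\,K_1\,\E\Big[\int_0^{t\wedge\tau^K_\K}\!\!\int_{[0,a_\infty)}\overline{\tauxinter}_r(a,\K)\,\mathcal Y^K(s,\{r\},da)\,ds\Big],
$$
which is finite by Lemma~\ref{lem:boundtightness-renormalise} applied to $\overline{\tauxinter}_r(\cdot,\K)$ together with the integrability in Assumption~\ref{hyp:alpha}. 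The prey birth/death integral is bounded by $C\K T$ on the localized event, and the predator demographic integral is controlled by the bounded rates $\tauxnaissance_r,\tauxmort_r$ and the domination of the total predator mass by a pure birth process, as in Proposition~\ref{prop:existe}. Summing the three bounds gives $\E[\langle M^K\rangle(t\wedge\tau^K_\K)]<\infty$, whence, with Doob's $L^2$ inequality, $(M^K(t\wedge\tau^K_\K))_{t\ge0}$ is a square-integrable martingale with the announced bracket. The whole argument therefore hinges on Lemma~\ref{lem:boundtightness-renormalise}, the device that converts a possibly subexponential age tail into the convergent weight $e^{-\int\underline{\tauxinter}_r/2}$ of Assumption~\ref{hyp:alpha}.
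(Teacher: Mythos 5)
Your proof is correct and is, in substance, the argument the paper intends: the lemma is stated without proof as the scaled ``counterpart'' of Lemma~\ref{lem:mart-sansscaling}, and your reduction --- rescaled test functions $g(\cdot/K_1)$ and $f/K_2$, deterministic time change by $\lambda_K$ (which preserves local martingales and transforms brackets as you state), then square integrability of the stopped local martingale from $\E\big[\langle M^K\rangle(t\wedge\tau_{\K}^K)\big]<\infty$ using Lemma~\ref{lem:boundtightness-renormalise}, Assumption~\ref{hyp:alpha} and Doob --- mirrors exactly how the paper proves the un-scaled version (via Lemma~\ref{lem:boundtightness-pasrenormalise} and \eqref{hyp:alpha_ws}). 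One remark: your bookkeeping, which puts the prefactor $\lambda_K K_2=K_1$ on the prey-catch part of the drift and a factor $K_1 x$ in the first bracket term, is the correct one even though it does not literally match the displayed $\mathcal L^K$ and bracket (those displays carry typos, as one sees by taking $g=\mathrm{Id}$, $f=0$); your version is the one consistent with the drift and quadratic variation actually used later in Lemmas~\ref{lem:tightness}, \ref{lem:gamma} and \ref{lem:x}, so no change to your argument is needed.
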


We  introduce now the measures  $\Gamma^{K}_{\K}$   on  $\R_+\times \{S,M\}\times [0,a_{\infty})$ defined a.s. for every bounded measurable functions $H$ by
$$
\Gamma^{K}_{\K}(H)= \int_{\mathbb{R}_+}\int_{\mathcal X} H(s,r,a) \Gamma^{K}_{\K}(ds,dr, da)  =\int_0^{\tau_\K^K} \int_{\mathcal X} H(s,r,a) \mathcal Y^K(s,dr,da)ds
$$
We also set $$\Xi^{K}_{\K}(t)=\Xi^K( t\wedge \tau_\K^K), \qquad
\mathcal Y^K_{\K}(t)=\mathcal Y^K(t\wedge \tau_{\K}^K ,\{S,M\},[0,a_\infty))$$
for the localized version of the processes counting preys and predators.
 Considering such space-time measures for proving averaging  results is inspired from \cite{K92,KKP}. However, we do not consider here the occupation measure of the fast variables $ \mathcal Y^K(t,dr, da)$.

\begin{lem}
\label{lem:tightness}
For every $\K >0$ and $T>0$, the sequence $(\Xi^{K}_{\K},\mathcal Y^K_{\K},\Gamma^{K}_{\K})_K$ is tight in $ \mathbb D([0,T],\R_+)^2\times \mathfrak  M ([0,T]\times \mathcal{X})$. 
\end{lem}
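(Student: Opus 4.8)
The plan is to prove tightness of each of the three marginals separately, since tightness in the finite product space $\mathbb D([0,T],\R_+)^2\times\mathfrak M([0,T]\times\mathcal X)$ is equivalent to tightness of each coordinate. Throughout we work before the exit time $\tau_\K^K$, so that $\Xi^K_\K$ takes values in the compact interval $(1/\K,\K)$, which already provides the compact containment condition for the first coordinate and the localization needed to use the bounds $\underline\tauxinter_r(\cdot,\K)$, $\overline\tauxinter_r(\cdot,\K)$.

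For the total predator process $\mathcal Y^K_\K$, I would apply the martingale problem of Lemma~\ref{lem:mart-avecscaling} with $g=0$ and $f\equiv1$. Since $f$ is constant, $\partial_a f=0$ and $f(\overline r,0)-f(r,a)=0$, so every fast term of $\mathcal L^K$ (those carrying $\lambda_K$) cancels and the drift reduces to $\int_{\mathcal X}(\tauxnaissance_r(a)-\tauxmort_r(a))\,\mathcal Y^K(dr,da)$, bounded by $(\bar\tauxnaissance+\sup_{r,a}\tauxmort_r(a))\,\mathcal Y^K_\K(t)$. The bracket furnished by Lemma~\ref{lem:mart-avecscaling} is of order $1/K_2$ and hence vanishes. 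Controlling $\sup_{t\le T}\mathcal Y^K_\K(t)$ by domination by a Yule process of individual rate $\bar\tauxnaissance$ gives a uniform $L^1$ bound, and the Aldous--Rebolledo criterion then yields tightness in $\mathbb D([0,T],\R_+)$.

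For the occupation measure $\Gamma^K_\K$, tightness of random finite measures in the narrow topology follows from tightness of the total mass together with a no-escape-of-mass estimate. The total mass $\Gamma^K_\K([0,T]\times\mathcal X)=\int_0^{T\wedge\tau_\K^K}\mathcal Y^K(s,\{S,M\},[0,a_\infty))\,ds$ has bounded expectation by the predator bound above. For the escape of mass I would use Lemma~\ref{lem:boundtightness-renormalise} with the Lyapunov function $f=\lyap$ of Assumption~\ref{hyp:alpha}, which gives $\E[\Gamma^K_\K(\lyap)]\le C\int_{[0,a_\infty)}\lyap(a)\,e^{-\int_0^a\underline\tauxinter_r(u,\K)\,du/2}\,da<\infty$, uniformly in $K$. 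Since $\lyap$ is continuous with $\lyap(a)\to\infty$ as $a\to a_\infty$, each sublevel set $\{\lyap\le R\}$ is a compact subset of $[0,a_\infty)$; Markov's inequality then gives $\sup_K\P(\Gamma^K_\K(\{\lyap>R\})>\epsilon)\le C/(\epsilon R)\to0$, and combined with the compactness of $[0,T]\times\{S,M\}$ this yields tightness of $\Gamma^K_\K$.

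The delicate coordinate is $\Xi^K_\K$. Using Lemma~\ref{lem:mart-avecscaling} with $f=0$ and $g\in\mathcal C^{1,b}$ coinciding with the identity on $(1/\K,\K)$, I write $\Xi^K_\K=\Xi^K(0)+A^K+M^K$. After cancellation of the scaling factors the bracket of $M^K$ is of order $\tfrac1{K_1}\int_0^{t\wedge\tau_\K^K}\int\tauxinter_S\,\mathcal Y^K(\{S\},da)\,ds$, whose expectation tends to $0$ by Lemma~\ref{lem:boundtightness-renormalise} applied to $\overline\tauxinter_S(\cdot,\K)$ and Assumption~\ref{hyp:alpha}; the martingale part is thus negligible and only the drift $A^K$ remains. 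Its regular part $(\tauxnaissance-\tauxmort)\int_0^{\cdot\wedge\tau_\K^K}\Xi^K\,ds$ is Lipschitz with constant $\le(\tauxnaissance+\tauxmort)\K$, hence equicontinuous, so everything reduces to the increasing predation term $B^K(t)=\int_0^{t\wedge\tau_\K^K}\int\tauxinter_S(a,\Xi^K)\,\mathcal Y^K(\{S\},da)\,ds$. To obtain the Aldous condition I would bound, for a stopping time $S_K$, the increment $\E[B^K((S_K+\delta)\wedge\tau_\K^K)-B^K(S_K\wedge\tau_\K^K)]$ by splitting the predation events over $[S_K,S_K+\delta]$ into the first catch of each predator already alive at $S_K$ and the subsequent ones: a predator present at $S_K$, even at large age with large rate, resets to age $0$ after one catch, so these first catches contribute only $O(1/\lambda_K)$ to the increment, while the subsequent catches all issue from age $0$ and are governed by the renewal structure, so a version of the estimate behind Lemma~\ref{lem:boundtightness-pasrenormalise} restarted on $[S_K,S_K+\delta]$ --- where the age-$0$ occupancy and the births within the window each contribute a factor proportional to $\delta$ --- bounds them by $C\delta$. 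Letting first $K\to\infty$ and then $\delta\to0$ gives the criterion and completes the tightness of $\Xi^K_\K$. I expect this last point, namely converting the renewal/local-time control into a short-window increment bound that is uniform in $K$ despite the unbounded interaction rates $\tauxinter_r$, to be the main obstacle.
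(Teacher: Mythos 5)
Your overall architecture is the same as the paper's: localize by $\tau_\K^K$, treat the three marginals separately, use the Yule-process domination for the predator mass, control $\Gamma^K_\K$ through its total mass plus the Lyapunov bound of Lemma~\ref{lem:boundtightness-renormalise} with $\lyap$ from Assumption~\ref{hyp:alpha} (your Markov-inequality/compact-sublevel-set argument is exactly what the paper delegates to Lemma 1.1 of \cite{K92}), kill the martingale bracket of $\Xi^K_\K$ via Lemma~\ref{lem:boundtightness-renormalise} applied to $\overline{\tauxinter}_S(\cdot,\K)$, and conclude by Aldous--Rebolledo as in \cite{JM86}. The genuine divergence is in the Aldous estimate for the predation drift $B^K$. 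The paper's argument is purely analytic and reuses only what is already proved: split the age integral at a level $b$; on $[b,a_\infty)$ the contribution over the \emph{whole} horizon $[0,T]$ (hence over any window) is uniformly small in $K$ by Lemma~\ref{lem:boundtightness-renormalise} and Assumption~\ref{hyp:alpha}; on $[0,b]$ the rate $\tauxinter_S$ is bounded by continuity, so the window increment is $O(h)$ times $\E\big[\sup_{t\le T}\mathcal Y^K\big]$. Your route is instead probabilistic: identify $\E[\Delta B^K]$ with $\E[\Delta N]/K_1$ ($N$ the number of catches, via optional stopping) and count catches, first catch per predator giving $O(1/\lambda_K)$ and subsequent catches handled by a renewal estimate restarted at the stopping time. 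This is attractive because counting catches sidesteps the unbounded rates entirely, but it is not free: the $C\delta$ bound on subsequent catches does \emph{not} follow from $\inf_x \E[T_r(x)]>0$ alone (inter-catch cycles whose laws put most mass near $0$ with rare long excursions have mean bounded below yet produce many renewals in a short window). To make it rigorous you must dominate each predator's catch process by an i.i.d.\ renewal process, using the hazard bounds $\tauxinter_r(a,x)\le \overline{\tauxinter}_r(a,\K)$ valid before $\tau_\K^K$, and then invoke the (finite, linearly growing) renewal function of that dominating process, uniformly over stopping times and including predators born in the window. So your approach buys a conceptually appealing argument robust to unbounded rates, at the cost of proving a new restarted-renewal lemma; the paper's age-truncation argument costs nothing beyond Lemma~\ref{lem:boundtightness-renormalise}, which is precisely why it was set up that way. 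You correctly flagged this step as the main obstacle; with the domination argument spelled out, your proof closes.
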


\begin{proof}
On the first hand, using a domination of the process $\mathcal{Y}^K(\cdot ,\{S,M\}, [0,a_{\infty}))$  by a linear birth process, we have
\begin{equation}
\label{dominnb}
\sup_K \E\left( \sup_{t\leq T}  \mathcal{Y}^K( s,\{S,M\}, [0,a_{\infty}))\right)<\infty.
\end{equation}
Then the first moment of $(\Gamma^{K}_{\K}([0,T]\times  \{S,M\}\times [0,a_{\infty})))_{K}$ is  bounded and it is a tight sequence in $\R$.

On the second hand, we can combine Assumption~\ref{hyp:alpha} with Lemma~\ref{lem:boundtightness-renormalise} to obtain
\be
\label{borneencore}
\sup_{K\geq 1} \mathbb{E}\left[ \int_0^{\tau^K_{\K} \wedge T}\int_{[0,a_{\infty})} \lyap(a) \mathcal{Y}^K( s,\{r\},da) \, ds  \right] <+\infty.
\ee
for $r\in \{S,M\}$. Then $\sup_K \Gamma^{K}_{\K}(H)<\infty$, with $H(r,a,s)=\lyap(a)\mathbf 1_{s\leq T}$ tending to infinity as $a\rightarrow a_{\infty}$, uniformly  for $s\leq T$. Lemma 1.1 of \cite{K92} then entails the relative compactness of the sequence $(\Gamma^{K}_{\K})_{K\geq 1}$ in the  space of finite measures embedded with the weak (narrow) topology. 

Now, we show that $(\Xi^{K}_{\K})_{K\geq 1}$ is tight by using the Aldous-Rebolledo criterion. Lemma~\ref{lem:mart-avecscaling} gives the semi-martingale decomposition $$\Xi^{K}_{\K}=\Xi^{K}_{\K}(0)+A^K +M^K,$$ where
 \begin{align*}
A^K(t)&= \int_0^{t\wedge \tau^K_{\K}} (\tauxnaissance-\tauxmort)\Xi^K_{\K}(s)ds -\int_{0}^{t\wedge \tau^K_{\K}} \int_{[0,a_{\infty})}  \tauxinter_S( a,\Xi^K_{\K}(s)) \mathcal Y^K(s,\{S\},da) ds,  \\
\langle M^K\rangle (t)&= \frac{1}{K_1}\int_0^{t\wedge \tau^K_{\K}} (\tauxnaissance+\tauxmort)\Xi^K_{\K}(s)ds +\frac{1}{K_1}\int_{0}^{t\wedge \tau^K_{\K}} \int_{[0,a_{\infty})} \tauxinter_S(a,\Xi^K_{\K}(v))  \mathcal Y^K(v,\{S\},da)ds.
\end{align*}
Hence, writing $\mathcal T^K$ the set of stopping times associated to $\Xi^{K}_{\K}$,
for any  $\sigma\in \mathcal T^K$  and $h>0$,
\begin{align*}
&\mathbb{E}\left[\vert A^K(\sigma )-A^K(\sigma+h)\vert \right]\\
&\qquad \leq h \,  \K \, \vert\tauxnaissance-\tauxmort\vert + \mathbb{E}\left[\int_{\sigma\wedge \tau^K_{\K}}^{(\sigma+h)\wedge \tau^K_{\K}} \int_{[0,a_{\infty})}\tauxinter_S(a,\Xi^K(v)) \mathcal Y^K(v,S,da) dv \right]. 
\end{align*} 
Using again  Assumption~\ref{hyp:alpha} and Lemma~\ref{lem:boundtightness-renormalise}
with now $f(a)=\overline{\alpha}_S(a,\K)$, we get
$$ 
\lim_{b\to a_\infty}\sup_{K\geq 1} \mathbb{E}\left[ \int_0^{\tau^K_{\K} \wedge T}\int_{[b,a_{\infty})}\tauxinter_S(a,\Xi^K(v)) \, \mathcal{Y}^K( s,\{S\}, da)  ds \right] = 0.
$$
Using \eqref{dominnb} and that  $\tauxinter_S$ is bounded on compacts sets of $[0,a_{\infty})\times (0,\infty)$ by continuity,  we obtain for any $b\in [0,a_{\infty})$,
$$\lim_{h\rightarrow 0}  \sup_{\substack{K\geq 1,\\   \sigma \in \mathcal T^K, \, h \leq \delta}} \mathbb{E}\left[\int_{\sigma\wedge \tau^K_{\K}}^{(\sigma+h)\wedge \tau^K_{\K}} \int_{[0,b]}\tauxinter_S(a,\Xi^K(v)) \mathcal Y^K(v,S,da) dv \right]=0.$$
Combining these estimates yields
\begin{equation}
\label{eq:accroissement}
 \lim_{\delta \to 0}  \sup_{\substack{K\geq 1,\\   \sigma \in \mathcal T^K, \, h \leq \delta}} 
 \mathbb{E}\left[\vert A^K(\sigma )-A^K(\sigma+h)\vert \right]= 0.
\end{equation}
Proceeding  analogously  for  the quadratic variation of $M^K$ and using \cite[Theorem 2.3.2]{JM86} ends the proof of tightness of $(\Xi^{K}_{\K})_{K\geq 1}$. The proof of tightness of $(\mathcal Y^K_{\K})_{K\geq 1}$ is similar since birth and death rates are bounded.
\end{proof}

We proceed now  with identification of limiting points. Recall that the survival function of interaction times  is denoted by $p_r$  in \eqref{eq:pr}
 and response for prey is $\phi$, see \eqref{eq:phi}.

\begin{lem}\label{lem:gamma} Let $T>0$,  $\K_0>0$ and consider a limiting point $(\Xi_{\K_0},\Gamma_{\K_0})$ of $(\Xi^{K}_{\K_0},\Gamma^{K}_{\K_0})$ in $\mathbb{D}([0,T], \mathbb{R}_+) \times \mathfrak{M}([0,T]\times \mathcal X)$. For all but countably many $\K<\K_0$, it satisfies  for any  $r\in \{S,M\}$, and $f$ continuous bounded on $\R_+\times [0,a_{\infty})$, 
\begin{align*}
&\int_0^{\tau_{\K}}\int_{[0,a_{\infty})} f(s,a) \Gamma_{{\K_0}}(ds,\{r\},da)\\
&\qquad = \int_0^{\tau_{\K}}\int_{[0,a_{\infty})} f(s,a)  p_r(\Xi_{\K_0}(s),a) \phi\left(\Xi_{\K_0}(s)\right) \, \Gamma_{\K_0}(ds,\{S,M\}, [0,a_\infty) )da \qquad \text{a.s.},
\end{align*}
 where
$$
\tau_\K= \inf\left\{ t\geq 0 \ | \  \Xi_{\K_0}(t) \notin (1/\K,\K)  \right\}.
$$
\end{lem}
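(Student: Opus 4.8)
The plan is to extract the averaging identity from the martingale problem of Lemma~\ref{lem:mart-avecscaling} by isolating the fast part of the generator $\mathcal L^K$. First I would apply that lemma with $g=0$ and a test function $f\in\mathcal C^{1,b}(\mathcal X)$ depending only on the status and age of predators. With this choice $F_{0,f}(\mathcal Z^K(t))=\langle \mathcal Y^K(t),f\rangle$ is bounded uniformly in $K$ (by the domination \eqref{dominnb}), and the generator reduces to
$$\mathcal L^KF_{0,f}(x,\mu)=\int_{\mathcal X}(\tauxnaissance_r(a)f(M,0)-\tauxmort_r(a)f(r,a))\mu(dr,da)+\lambda_K\int_{\mathcal X}\Big(\partial_a f(r,a)+\tauxinter_r(a,x)(f(\overline r,0)-f(r,a))\Big)\mu(dr,da).$$
Dividing the stopped identity $M^K(t\wedge\tau^K_\K)=F_{0,f}(\mathcal Z^K(t\wedge\tau^K_\K))-F_{0,f}(\mathcal Z^K(0))-\int_0^{t\wedge\tau^K_\K}\mathcal L^KF_{0,f}\,ds$ by $\lambda_K$, the left-hand side and the two boundary terms tend to $0$ since they are $O(1/\lambda_K)$, the slow drift term involving $\tauxnaissance_r,\tauxmort_r$ is also $O(1/\lambda_K)$, and $\lambda_K^{-1}M^K(t\wedge\tau^K_\K)\to 0$ in $L^2$ because the bracket in Lemma~\ref{lem:mart-avecscaling}, once divided by $\lambda_K^2$, is of order $(\lambda_K K_2)^{-1}=K_1^{-1}$ by Lemma~\ref{lem:boundtightness-renormalise} and Assumption~\ref{hyp:alpha}.

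Thus the only surviving term forces
$$\int_0^{t\wedge\tau^K_\K}\int_{\mathcal X}\Big(\partial_a f(r,a)+\tauxinter_r(a,\Xi^K(s))(f(\overline r,0)-f(r,a))\Big)\mathcal Y^K(s,dr,da)\,ds\longrightarrow 0.$$
The heart of the proof is to pass to the limit in this expression jointly with $(\Xi^K_{\K_0},\Gamma^K_{\K_0})$. I would use a Skorokhod representation to assume almost sure convergence and then split the difficulty in two. The unboundedness of $\tauxinter_r$ as $a\to a_\infty$ is handled by truncating the integrand to $a\le b$: by Lemma~\ref{lem:boundtightness-renormalise} and Assumption~\ref{hyp:alpha} the mass that $\Gamma^K_{\K_0}$ puts on $\{a\ge b\}$ weighted by $\overline{\tauxinter}_r(a,\K)$ is uniformly small, so one lets $K\to\infty$ first and $b\to a_\infty$ afterwards. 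The dependence on $\Xi^K(s)$ is handled by writing the integrand as $h(s,\Xi^K(s),r,a)$ with $h$ continuous in $(x,a)$ on the compact set $s\le\tau_\K$, $x\in[1/\K,\K]$, $a\le b$, and by exploiting that $\Gamma^K(ds,\cdot)=\mathcal Y^K(s,\cdot)ds$ is absolutely continuous in $s$, so the countably many jump times of $\Xi_{\K_0}$ are negligible. Replacing $\int_0^{t\wedge\tau^K_\K}$ by $\int_0^{t\wedge\tau_\K}$ requires restricting to those levels $\K$ at which $s\mapsto\Xi_{\K_0}(s)$ does not dwell on $\{1/\K,\K\}$ on a set of positive Lebesgue measure and at which the exit-time functional is continuous; only countably many $\K$ are excluded, which is the source of the ``all but countably many'' restriction.

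At this stage, disintegrating $\Gamma_{\K_0}(ds,dr,da)=m(ds)\rho_s(dr,da)$ with $m(ds)=\Gamma_{\K_0}(ds,\{S,M\},[0,a_\infty))$ and $\rho_s$ a probability kernel on $\mathcal X$, the identity reads, for $m$-almost every $s\le\tau_\K$ and every $f\in\mathcal C^{1,b}(\mathcal X)$,
$$\int_{\mathcal X}\Big(\partial_a f(r,a)+\tauxinter_r(a,\Xi_{\K_0}(s))(f(\overline r,0)-f(r,a))\Big)\rho_s(dr,da)=0.$$
This is precisely the weak form of the stationarity equation for the two-status renewal dynamics at fixed prey density $x=\Xi_{\K_0}(s)$. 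I would solve it by testing against smooth $f$ and integrating by parts: writing $\rho_s(\{r\},da)=n_r(a)da$, the interior terms give $n_r'(a)=-\tauxinter_r(a,x)n_r(a)$, hence $n_r(a)=n_r(0)p_r(x,a)$ with $p_r$ as in \eqref{eq:pr}, while the boundary terms at $a=0$ together with $\int_0^{a_\infty}\tauxinter_r(a,x)p_r(x,a)da=1$ force $n_S(0)=n_M(0)$. Normalising $\rho_s$ to a probability measure yields $n_S(0)=n_M(0)=\phi(x)$ with $\phi$ as in \eqref{eq:phi}, that is $\rho_s(\{r\},da)=\phi(\Xi_{\K_0}(s))\,p_r(\Xi_{\K_0}(s),a)\,da$, which is exactly the claimed identity.

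The step I expect to be the main obstacle is the passage to the limit in the fast drift term: combining the \emph{unboundedness} of $\tauxinter_r$ (so that the integrand is not a bounded continuous function and narrow convergence of $\Gamma^K$ does not apply directly) with the \emph{joint} dependence on the slow variable $\Xi^K(s)$ and with the transfer of the stopping from $\tau^K_\K$ to $\tau_\K$. Assumption~\ref{hyp:alpha} and Lemma~\ref{lem:boundtightness-renormalise} are exactly what make the truncation in $a$ uniform in $K$, and the restriction to all but countably many levels $\K$ is what renders the exit-time transfer legitimate.
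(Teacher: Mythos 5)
Your proposal follows the paper's own proof almost step for step: the same use of Lemma~\ref{lem:mart-avecscaling} with $g=0$ and status--age test functions, the same identification of the fast $\lambda_K$-term as the only surviving one (your $L^2$ bracket estimate of order $K_1^{-1}$ is a correct variant of the paper's argument that the limiting process is a Lipschitz martingale, hence null), the same truncation strategy via Assumption~\ref{hyp:alpha} and Lemma~\ref{lem:boundtightness-renormalise} to pass to the limit despite the unboundedness of $\tauxinter_r$, the same exclusion of countably many levels $\K$ for the convergence of exit times, and the same weak stationarity equation for the disintegrated limit measure. Your joint disintegration $\Gamma_{\K_0}(ds,dr,da)=m(ds)\rho_s(dr,da)$, with the $S$--$M$ balance read off from the boundary terms at $a=0$, is an equivalent repackaging of the paper's per-status decomposition (Lemma 1.4 of the Kurtz reference) followed by the choice $f\equiv 1$.

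The one step where your argument, as written, has a genuine gap is the resolution of the stationarity equation. You write $\rho_s(\{r\},da)=n_r(a)\,da$ and integrate by parts to obtain $n_r'=-\tauxinter_r n_r$; but absolute continuity (let alone differentiability) of $\rho_s(\{r\},\cdot)$ is not known at this stage --- it is essentially the content of the lemma being proved. A priori $\rho_s(\{r\},\cdot)$ is only a finite measure and could carry atoms or a singular part, so the pointwise ODE cannot be written down directly; one must either invoke regularity theory for measures solving $D\mu=-\tauxinter_r\mu$ in the distributional sense, or argue as the paper does: for every $g\in C^1_c([0,a_\infty))$ with $\int g(v)\,p_S(\Xi_{\K_0}(s),v)\,dv=0$, the function
$$
f(a)=p_S(\Xi_{\K_0}(s),a)^{-1}\int_0^a g(v)\,p_S(\Xi_{\K_0}(s),v)\,dv
$$
is an admissible test function, vanishes at $0$, and solves the Poisson equation $\partial_a f-\tauxinter_S f=g$; feeding it into the stationarity equation gives $\int g\,d\rho_s(\{S\},\cdot)=0$ for all such $g$, and since these $g$ span the hyperplane orthogonal to $p_S$, this forces $\rho_s(\{S\},\cdot)$ to be proportional to $p_S(\Xi_{\K_0}(s),\cdot)\,da$ with no a priori regularity assumption whatsoever. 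With this duality step substituted for your integration by parts, your proof coincides with the paper's.
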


\begin{proof}
To avoid the use of a sub-sequence, we assume that the sequence $(\Xi^{K}_{\K_0},\Gamma^{K}_{\K_0})_K$ converges in law to $(\Xi_{\K_0},\Gamma_{\K_0})$ as $K\rightarrow\infty$. Following the proof of \cite[Theorem 4.1 p.354]{EK09}, for all but countably many $\K<\K_0$, $(\tau^{K}_{\K})_K$ converges in law to  $\tau_{\K}$. Indeed,  from \cite[Proposition~2.11, Chapter VI]{JS}, the hitting time $\tau^{K}_{\K}$ is a continuous function of the process $\Xi_{\K_0}^{K}$, except for   discontinuity points of $\Xi_{\K_0}^{K}$. This set of points is at most countable, see \cite[Lemma~2.10 b), Chapter VI]{JS}.\\
Consequently, for all but countably many $\K< \K_0$ and $r\in \{S,M\}$, we have for any continuous and bounded function $f$ on $[0,a_{\infty})$, 
\begin{align}
&\lim_{K \to \infty} \int_0^{\tau_\K^K} \int_{[0,a_{\infty})} f(a) \mathcal{Y}^K(s,\{r\},da)ds
= \int_0^{\tau_{\K}} \int_{[0,a_{\infty})} f(a) \Gamma_{\K_0}(ds, \{r\},da). \label{Eq1}
 \end{align} 
Using arguments of \cite[Lemma~2.9]{KKP}, which can be applied thanks to integrability condition of Assumption~\ref{hyp:alpha} and Lemma \ref{lem:boundtightness-renormalise}, this convergence already holds for continuous space-time function $f: [0,T] \times [0,a_{\infty}) \mapsto \mathbb{R}$ which are dominated by $(1+\alpha_r)$.
Let us use Lemma~\ref{lem:mart-avecscaling} with $g=0$ and $f\in \mathcal C^{1,b}(\mathcal X)$ such that $f(M,\cdot)=0$. Writing $f(S,\cdot)=f \in  \mathcal C^{1,b}([0,a_{\infty})) $,
\begin{align*}
M^K(t) 
&=\frac{1}{\lambda_K} \left\{\int_{[0,a_{\infty})} f (a) \mathcal Y^{K}_{\K_0}(t\wedge  \tau_{\K}^K,\{S\},da) - \int_{[0,a_{\infty})}  f (a) \mathcal Y^{K}_{\K_0} (0,\{S\},da)\right\} \\
&\qquad \quad -  \int_0^{t\wedge \tau_{\K}^K} \int_{\mathcal X}  H(\Xi^{K}_{\K_0}(s),r,a) \Gamma^{K}_{\K_0}(ds,dr,da) \\
&\qquad \quad +\frac{1}{\lambda_K} \int_0^{t\wedge \tau_{\K}^K} \int_{[0,a_{\infty})} (\tauxnaissance_S(a)-\tauxmort_S(a)) f(a) \mathcal Y^{K}_{\K_0}(s,\{S\},da)ds,
\end{align*}
is a square integrable martingale, where
\begin{equation}
\label{eq:H}
H(x, S ,a) =  \partial_a f(a) -\tauxinter_S(x,a) f(a), \quad 
H(x, M,a)=  \tauxinter_M(x,a) f(0).
\end{equation}
Using \eqref{Eq1} guarantees that we have the following convergence in law
\begin{align*}
&\lim_{K \to \infty} \int_0^{t\wedge \tau_{\K}^K} \int_{\mathcal X}  H(\Xi^{K}_{\K_0}(s),r,a) \Gamma^{K}_{\K_0}(ds,dr,da)\\
&\qquad \qquad \qquad = \int_0^{t\wedge \tau_{\K}} \int_{\mathcal X}  H(\Xi_{\K_0}(s),r,a) \Gamma_{\K_0}(ds,dr,da).
\end{align*}
Besides \eqref{dominnb} ensures that 
\begin{align}
& \lambda_K^{-1}\left| \int_{[0,a_{\infty})}f (a) \mathcal Y^{K}_{\K_0}(t,\{S\}, da) - \int_{[0,a_{\infty})} f (a)  \mathcal Y^{K}_{\K_0}(0,\{S\}, da) \right| \label{eq:uneetapeparmitantdautre}  \\
 &\qquad \qquad \qquad \qquad  \qquad \qquad \qquad \qquad  \qquad \qquad \leq \frac{C \|f\|_\infty}{\lambda_K} \,  \sup_{t\leq T}   \mathcal Y^{K}_{\K_0}(t,\{S\}, [0,a_{\infty})),\nonumber
\end{align}
which tends to $0$, in probability, as $K\to \infty$. Similarly, in probability,
$$
\lim_{K \to \infty} \frac{1}{\lambda_K} \int_0^{t\wedge \tau_{\K}^K} \int_{[0,a_{\infty})} (\tauxnaissance_S(a)-\tauxmort_S(a)) f(a) \mathcal Y^{K}_{\K_0}(s,\{S\},da)ds = 0.
$$

Combining this three last convergence results, we obtain that $M^K$  converges in law to $M$, given, for all $t\geq 0$, by 
$$
M(t)=-  \int_0^{t\wedge \tau_{\K}} \int_{\mathcal X}  H(\Xi_{\K_0}(s),r,a) \Gamma_{\K_0}(ds,dr,da).
$$
Process $M$ remains a martingale. It is also a.s. Lipschitz because function $H$ is bounded. Consequently, it is null. We have then proved that for every $t\geq 0$ and 
$f\in \mathcal C^{1,b}([0,a_{\infty}))$, we have
\begin{equation}
\label{eq:annuleHS}
 \int_0^{t\wedge \tau_{\K}} \int_0^\infty H(\Xi_{\K_0}(s),r,a) \Gamma_{\K_0}(ds,dr,da) =0\, \text{ a.s.}
\end{equation}
for $H$ defined in \eqref{eq:H}. Now, thanks to \cite[Lemma 1.4]{K92}, we can decompose $\Gamma_{\K_0} (ds, \{S\},da)$ as $\Gamma_{\K_0} (ds, \{S\},da) = \gamma_{\K_0}(s,\{S\},da)\Lambda_S(ds)$. As \eqref{eq:annuleHS} holds for every $t\geq 0$,  focusing on functions $f$ such that $f(0)=0$,
we obtain a.s. and for $\Lambda_S$-almost all $s\leq t\wedge \tau_\K$,
\begin{equation*}
 \int_0^\infty H(\Xi_{\K_0}(s),S,a) \gamma_{\K_0}(s,\{S\}, da) =0.
\end{equation*}
 In conclusion, for every $f\in\mathcal C^{1,b}([0,a_{\infty}))$ such that $f(0)=0$ and for $\Lambda_S$-almost all $s\leq t\wedge \tau_{\K}$, we almost surely have
\begin{equation}
\label{eq:annuleHS2}
 \int_{[0,a_{\infty})} (\partial_a f_S(a) -\tauxinter_S(\Xi_{\K_0}(s),a) f(a)) \gamma_{\K_0}(s,\{S\},da) =0.
\end{equation}
Let us show now that this functional equation imposes the form of $\gamma_{\K_0}$ through the solutions of the associated Poisson Equation. We proceed with a fix realization of the process and the results hold a.s. Consider $s\leq t\wedge \tau_{\K}$. For 
 any test function $g\in C^1_c([0,a_{\infty}))$ such that
$$
\int_0^\infty g(v) p_S(\Xi_{\K_0}(s),v) dv=0,
$$
the  function $f$ defined by
$$
f : a\mapsto p_S(\Xi_{\K_0}(s),a)^{-1} \int_0^a g(v) p_S(\Xi_{\K_0}(s),v) dv
$$
 is well-defined  for each fixed $s$ and belongs to $\mathcal C^{1,b}(\mathcal X)$. This function verifies $f(0)=0$ and is solution of the Poisson equation:
$$
\forall a\in [0, a_\infty), \qquad  \partial_a f(a) -\tauxinter_S(\Xi_{\K_0}(s),a) f(a)= g(a) \text{ a.s. }  
$$
By \eqref{eq:annuleHS2}, it yields 
$$
\int_{[0,a_{\infty})} g(a)\, \gamma_{\K_0}(s,\{S\},da) =  0.
$$ 
We extend this identity to $g\in C^1([0,a_{\infty}))$ such that 
$\int_0^\infty g(v) p_S(\Xi_{\K_0}(s),v) dv=0$ by an approximation argument.
We   can then apply this identity  to  $g:a \mapsto h(a) - \int_{[0,a_{\infty})} h(v) p_S(\Xi_{\K_0}(s),v)dv$ for any $h\in C^1([0,a_{\infty}))$. We   obtain that
 $p_S(\Xi_{\K_0}(s), \cdot)$ is  the density of the measure $\gamma_{\K_0}(s,\{S\},\cdot)$ with respect to Lebesgue measure. Hence,
\begin{equation}
\label{eq:gammaStmp}
\Gamma_{\K_0}(ds,\{S\},da) =   \gamma_{\K_0}(s,\{S\},[0,a_\infty)) \frac{p_S(\Xi_{\K_0}(s),a)}{\int_0^\infty p_S(\Xi_{\K_0}(s),w) dw}  \Lambda_S(ds)da.
\end{equation}
Similarly, we can prove
\begin{equation}
\label{eq:gammaMtmp}
\Gamma_{\K_0}(ds,\{M\},da) =   \gamma_{\K_0}(s, \{M\} , [0,a_\infty))  \frac{p_M(\Xi_{\K_0}(s),a)}{\int_{[0,a_{\infty})} p_M(\Xi_{\K_0}(s),w) dw}\Lambda_M(ds)da.
\end{equation}
Now, using  \eqref{eq:annuleHS} with  $f\equiv 1$ yields for every $t\geq 0$,
$$
\int_0^{t\wedge \tau_{\K}} \int_{[0,a_{\infty})}\tauxinter_S(a,\Xi_{\K_0}(s))  \Gamma_{\K_0}(ds,\{S\},da) = \int_0^{t\wedge \tau_\K} \int_{[0,a_{\infty})} \tauxinter_M(a,\Xi_{\K_0}(s))  \Gamma_{\K_0}(ds,\{M\},da).$$
This implies the following equality of measures
$$
\int_{[0,a_{\infty})}\tauxinter_S(a,\Xi_{\K_0}(s))  \Gamma_{\K_0}(ds,\{S\},da) =\int_{[0,a_{\infty})}\tauxinter_M(a,\Xi_{\K_0}(s))  \Gamma_{\K_0}(ds,\{M\},da).
$$
Integrating ~\eqref{eq:gammaStmp} and ~\eqref{eq:gammaMtmp} over $[0,a_\infty)$ and using the previous equality, we obtain
$$
  \frac{ \gamma(s,\{S\},[0,a_\infty))}{\int_0^\infty p_S(\Xi_{\K_0}(s),w) dw} \Lambda_S(ds) =   \frac{ \gamma(s,\{M\},[0,a_\infty))}{\int_0^\infty p_M(\Xi_{\K_0}(s),w) dw} \Lambda_M(ds).
$$
Finally, we have
$$
  \gamma(s,\{r\},[0,a_\infty)) \Lambda_r(ds) =  \frac{\int_{[0,a_{\infty})} p_r(s,w) dw}{\overline{p}(\Xi_{\K_0}(s))}\, \Gamma_{\K_0}(ds,\{S,M\}, [0,a_\infty)),
$$
and
$$
  \Gamma(ds,\{r\},da)= \frac{p_r(\Xi_{\K_0}(s),a)}{\overline{p}(\Xi_{\K_0}(s))} \,\Gamma_{\K_0}(ds,\{S,M\}, [0,a_\infty))\,  da.
$$
It ends the proof.
\end{proof}

Let us now focus on the number of preys and the whole number of predators. We prove that
limiting points satisfy the ODE \eqref{ODElimit}.
 \begin{lem}
 \label{lem:x}
Let $T>0$ and $\K_0>0$ and $(\Xi_{\K_0},\Gamma_{{\K_0}})$   be a limiting point of
$(\Xi_{\K_0}^K,\Gamma_{\K_0}^K)_K$ in $\mathbb{D}([0,T], \mathbb{R}_+) \times \mathfrak{M}([0,T]\times \mathcal X)$. For all but countably many $\K<\K_0$, the measure $\mathbf{1}_{s\leq \tau_{\K}}\Gamma_{\K_0}(ds,\{S,M\}, [0,a_\infty))$ admits a density $\mathcal{Y}_{\K_0}$ with respect to the Lebesgue measure, which verifies, for all $t\geq 0$, 
\begin{align*}
\Xi_{\K_0}(t\wedge \tau_{\K})&= \Xi_{\K_0}(0) + \int_0^{t\wedge \tau_\K} \left((\gamma-\beta)\, \Xi_{\K_0}(s) -\mathcal{Y}_{\K_0}(s) \phi(\Xi_{\K_0}(s)) \right) ds\\
\mathcal{Y}_{\K_0}(t\wedge \tau_\K) &= \mathcal{Y}_{\K_0}(0 ) + \int_0^{t\wedge \tau_\K} \mathcal{Y}_{\K_0}(s) \psi(\Xi_{\K_0}(s)) ds.
\end{align*}
\end{lem}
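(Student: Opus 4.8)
The plan is to pass to the limit in the martingale problem of Lemma~\ref{lem:mart-avecscaling} for two carefully chosen test functions, and to compute the resulting limiting drifts using the structural identity for $\Gamma_{\K_0}$ established in Lemma~\ref{lem:gamma}. Throughout I fix $\K<\K_0$ among the full-measure set of values for which $\tau^K_\K\to\tau_\K$ in law (as at the start of the proof of Lemma~\ref{lem:gamma}), and, to avoid passing to subsequences, I assume the whole sequence $(\Xi^K_{\K_0},\mathcal Y^K_{\K_0},\Gamma^K_{\K_0})$ converges in law to $(\Xi_{\K_0},\cdot,\Gamma_{\K_0})$.

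First I would obtain the density and the predator equation simultaneously. Taking $g=0$ and $f\equiv 1$ in Lemma~\ref{lem:mart-avecscaling}, the generator collapses to $\mathcal L^K F_{0,1}(x,\mu)=\int_{\mathcal X}(\tauxnaissance_r(a)-\tauxmort_r(a))\,\mu(dr,da)$ and the bracket is of order $1/K_2$, so the localized martingale vanishes in the limit. This exhibits $t\mapsto\mathcal Y^K_{\K}(t)$ as a bounded perturbation of an absolutely continuous process whose derivative is uniformly bounded (the rates $\tauxnaissance_r,\tauxmort_r$ are bounded and the total mass is controlled by \eqref{dominnb}), hence its limit is Lipschitz. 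Testing $\Gamma^K_{\K_0}(ds,\{S,M\},[0,a_\infty))$ against $\varphi(s)$ and using $\int_0^{\tau^K_\K}\varphi(s)\,\mathcal Y^K_{\K}(s)\,ds$ then identifies the time-marginal of $\mathbf 1_{s\le\tau_\K}\Gamma_{\K_0}(ds,\{S,M\},[0,a_\infty))$ with $\mathcal Y_{\K_0}(s)\,ds$ for a density $\mathcal Y_{\K_0}$. Passing to the limit in the same decomposition and rewriting $\Gamma_{\K_0}(ds,dr,da)$ through Lemma~\ref{lem:gamma} gives the predator equation, since $\phi(x)\int_{[0,a_\infty)}\sum_r(\tauxnaissance_r(a)-\tauxmort_r(a))p_r(x,a)\,da=\psi(x)$ by \eqref{eq:psi}.

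For the prey equation I would take $g=\mathrm{id}$ and $f=0$, which is exactly the semimartingale decomposition $\Xi^K_\K=\Xi^K_\K(0)+A^K+M^K$ recorded in the proof of Lemma~\ref{lem:tightness}: there $\langle M^K\rangle=O(1/K_1)\to 0$, while $A^K$ carries the Malthusian term $(\tauxnaissance-\tauxmort)\Xi^K_\K$ and the predation term $\int_0^{\cdot}\!\int\tauxinter_S(a,\Xi^K_\K(s))\,\mathcal Y^K(s,\{S\},da)\,ds$. After passing to the limit I would apply Lemma~\ref{lem:gamma} to replace $\Gamma_{\K_0}(ds,\{S\},da)$ by $p_S(\Xi_{\K_0}(s),a)\,\phi(\Xi_{\K_0}(s))\,\Gamma_{\K_0}(ds,\{S,M\},[0,a_\infty))\,da$, and then use that $\tauxinter_S(x,a)p_S(x,a)=-\partial_a p_S(x,a)$ together with $p_S(x,0)=1$ and $p_S(x,a_\infty)=0$ to get $\int_{[0,a_\infty)}\tauxinter_S(x,a)p_S(x,a)\,da=1$. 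The predation term then reduces to $-\int_0^{t\wedge\tau_\K}\phi(\Xi_{\K_0}(s))\,\mathcal Y_{\K_0}(s)\,ds$ via the density found above, yielding the announced prey equation.

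The main obstacle will be the passage to the limit of the predation term $\int\tauxinter_S(a,\Xi^K_\K)\,d\Gamma^K_{\K_0}$, since $\tauxinter_S$ is unbounded near $a_\infty$ whereas weak convergence of $\Gamma^K_{\K_0}$ only controls bounded continuous integrands. I would handle this exactly as in Lemma~\ref{lem:gamma}: Assumption~\ref{hyp:alpha} together with Lemma~\ref{lem:boundtightness-renormalise} yields a uniform tail estimate showing that $\int_{[b,a_\infty)}\tauxinter_S\,d\Gamma^K_{\K_0}\to 0$ uniformly in $K$ as $b\to a_\infty$, so the argument of \cite[Lemma~2.9]{KKP} extends the convergence \eqref{Eq1} to integrands dominated by $1+\alpha_S$. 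A secondary point to treat with care is the localization: truncating $g=\mathrm{id}$ to a bounded function is legitimate because $\Xi_{\K_0}$ stays in $(1/\K,\K)$ up to $\tau_\K$, and the convergence $\tau^K_\K\to\tau_\K$ for all but countably many $\K<\K_0$ lets me pass to the limit in the stopped times $t\wedge\tau^K_\K$ cleanly.
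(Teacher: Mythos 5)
Your proposal is correct and follows essentially the same route as the paper: the paper's proof also passes to the limit in the martingale problem of Lemma~\ref{lem:mart-avecscaling} for exactly these two test functions ($f\equiv 1$, $g=0$ for the predators; $g=\mathrm{Id}$, $f\equiv 0$ for the preys), kills the brackets (of order $1/K_2$ and $1/K_1$ respectively), and invokes Lemma~\ref{lem:gamma} together with $\int_{[0,a_\infty)}\tauxinter_S(x,a)p_S(x,a)\,da=1$ and \eqref{eq:psi} to identify the limiting drifts, with the unbounded rate $\tauxinter_S$ handled by the same tail estimate from Assumption~\ref{hyp:alpha} and Lemma~\ref{lem:boundtightness-renormalise}. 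The only (immaterial) differences are the order in which the two equations are treated and the way the density of the time-marginal is extracted: you argue that the limit of $\mathcal Y^K_{\K}$ is Lipschitz and test against $\varphi(s)$, whereas the paper integrates the vanishing martingale in time and reads the density off the resulting integral identity.
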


\begin{proof}
As in Lemma~\ref{lem:gamma}, to avoid the use of sub-sequences, we assume that
$(\Xi_{\K_0}^K,\Gamma_{\K_0}^K)$ converges to $(\Xi_{\K_0},\Gamma_{{\K_0}})$ in law. We use again Lemma~\ref{lem:mart-avecscaling}, with now $f\equiv 0$ and $g\equiv \text{Id}$. It ensures that  $M^K$, defined for every $t\geq 0$ by
\begin{align*}
M^K(t)
=\Xi_{K_0}^K(t\wedge \tau_{\K}^K) &-\Xi_{\K_0}^K(0)
+ \int_0^{t\wedge \tau_{\K}^K}  (\gamma-\beta)\, \Xi_{\K_0}^K(s) ds\\
&-\int_0^{t\wedge \tau_{\K}^K}\int_0^{\infty}  \tauxinter_S(a,\Xi_{\K_0}^{K}(s)) \Gamma_{\K_0}^K(ds, \{S\},da),
\end{align*}
is a square integrable martingale. It then converges in law to $M$, defined for every $t\geq 0$ by 
\Bea
M(t)
=\Xi_{\K_0}(t\wedge \tau_{\K})&-&\Xi_{\K_0}(0)+\int_0^{t\wedge \tau_\K} (\gamma-\beta)\, \Xi_{\K_0}(s)ds\\
&-&\int_0^{t\wedge \tau_\K} \phi(\Xi_{\K_0}(s))\Gamma_{\K_0}(ds,\{S,M\}, [0,a_\infty)).
\Eea
Besides $M$ is a local martingale.
Similarly and as computed in the proof of Lemma~\ref{lem:tightness}, the bracket of $\big(M_{t\wedge \tau_{\K}^K}^K\big)_{t\geq 0}$  converges to $0$ in probability and then  $M_{t\wedge{\tau_\K}}=0$ a.s. It proves the first part of the result. \\

 We need now to describe  $\Gamma^{\K_0}(ds,\{S,M\}, [0,a_\infty))$. Again, we apply Lemma~\ref{lem:mart-avecscaling} but now with $f\equiv 1$ and $g\equiv 0$, to obtain that $M^K$ defined for all $t\geq 0$ by 
\begin{align*}
M^K(t)&=\mathcal Y^K(t\wedge \tau^K_\K,[0,a_\infty)) -\mathcal Y^K(0,[0,a_\infty))\\
&\quad +\int_0^{t\wedge \tau_\K^K}  \int_{[0,a_{\infty})} \tauxmort_S(a)\Gamma_{\K_0}^{K}(ds, \{S\},da)+\int_0^{t\wedge \tau_{\K}^K} \hspace{-0.3cm} \int_{[0,a_{\infty})} \tauxmort_M(a)\Gamma_{\K_0}^{K}(ds,\{M\},da)
\end{align*}
is a square integrable martingale and  
\begin{align*}
\langle M^K\rangle(t)&=
\frac{1}{K_2^2} \int_0^{t\wedge \tau_{\K}^K}  \sum_{i \in \mathcal P(s)} \left(\tauxnaissance(r_i(s),a_i(s))+\tauxmort(r_i(s),a_i(s)) \right) ds.
\end{align*}
Using \eqref{dominnb}, it ensures that $\E(\langle M^K\rangle(t\wedge \tau_{\K}^K))$   
converges to $0$. Consequently, the process  $\big(\int_0^{t \wedge {\tau_\K}} M^K(t) dt\big)_{t\in [0,T]}$ tends in law to $0$ in $\mathbb D([0,T], \R_+)$ as $K\to \infty$. It also tends to
\begin{align*}
0 &= \Gamma_{\K_0}([0,T\wedge \tau_\K],\{S,M\}, [0,a_\infty))
-\mathcal Y(0,[0,a_\infty))(T\wedge \tau_\K)\\
&\qquad \qquad \qquad \qquad  \qquad \qquad + \int_0^{T \wedge \tau_\K}  \int_\mathcal{X} \tauxmort_S(a) \Gamma_{\K_0}(ds,dr,da) dt.
\end{align*}
Using Lemma~\ref{lem:gamma} and the definition of $\psi$,  it yields
\begin{align*}
&\Gamma_{\K_0}([0,T\wedge \tau_\K],\{S,M\}, [0,a_\infty))\\
&\qquad \qquad =\mathcal Y(0,\{S,M\},[0,a_\infty))-\int_0^{T\wedge \tau_\K} \int_0^t \psi\left(\Xi_{\K_0}(s)\right) \Gamma_{\K_0}(ds,\{S,M\}, [0,a_\infty) ) dt.
\end{align*}
This means that the measure $\mathbf{1}_{s\leq \tau_\K}\Gamma_{\K_0}(ds,\{S,M\}, [0,a_\infty))$ has a density $\mathcal{Y}$ with respect to the Lebesgue measure defined for all $t\geq 0$ by
\begin{align*}
\mathcal{Y}(t)
 & = \mathcal Y(0,\{S,M\},[0,a_\infty)) - \int_0^t \psi\left(\Xi_{\K_0}(s)\right) \mathcal{Y}(s) ds.
\end{align*}
It is the desired result.
\end{proof}

\begin{proof}[Proof of Theorem~\ref{thm:main}] Let  $(x_0,y_0)\in (\R_+^*)^2$ be the initial condition of $(x,y)$.
 Assumption~\ref{eq:systeme-edo} guarantees that
  for any time horizon time $T>0$, there exists $\K_0>0$ such that for all $t\leq T$, $x(t)\in (1/\K_0,\K_0)$. Let $(\Xi_{\K_0},\mathcal Y_{\K_0},\Gamma_{{\K_0}})$ be any limiting values of $(\Xi_{\K_0}^{K}, \mathcal Y^K_{\K_0} ,\Gamma_{{\K_0}}^{K})$  in $\mathbb{D}([0,T], \mathbb{R}_+)^2  \times \mathfrak{M}([0,T]\times \mathcal X)$. By continuity of $x$, we can choose some $\K<\K_0$ such that conclusion of Lemma~\ref{lem:gamma} and Lemma~\ref{lem:x} hold and $x(t) \in (1/\K,\K)$ for any $t\leq T$. Consequently, $(\Xi_{\K_0}, \mathcal{Y}_{\K_0})$ and  $(x,y)$ satisfy the same evolution equation \eqref{ODElimit} on time interval $[0, T\wedge \tau_\K]$. Uniqueness guaranteed 
by Assumption~\ref{eq:systeme-edo} ensure that they coincide up to time $T\wedge \tau_\K$. It follows that 
$\tau_\K\geq T$ because $x(t)$ belongs to $(1/\K, \K)$ for any $t\leq T$. 

By Lemma~\ref{lem:gamma} and Lemma~\ref{lem:x}, we also have that 
$$
\Gamma_{{\K_0}}(dt,\{r\}, da)=y(t)p_r(x(t),a) \phi(x(t)) \, dt\, da.
$$
Besides,  for any continuous and bounded function $g$, we have both
$$
\int_0^{T\wedge \tau_\K} g(t)  \mathcal Y^K_{\K_0}(t) dt \stackrel{K\rightarrow \infty}{\longrightarrow} 
\int_0^{T\wedge \tau_\K} g(t)  \mathcal Y_{\K_0}(t) dt
$$
and 
$$
\int_0^{T\wedge \tau_\K} g(t)  \mathcal Y^K_{\K_0}(t) dt  \stackrel{K\rightarrow \infty}{\longrightarrow}   \int_0^{T\wedge \tau_\K} g(t) y(t) dt.
$$
since $  \sum_{r\in\{S,M\}} \int_0^{a_\infty} p_r(x(t),a) \phi(x(t)) \, da  =1$.
It ensures that
$$\mathcal Y_{\K_0}(t)= y(t) \qquad \text{for almost every } \, t\geq 0.$$ 
As trajectories are \textit{càdlàg}, this identity holds for every $t\geq 0$. Using now Lemma~\ref{lem:tightness}, it  ensures the convergence of $(\Xi^{K},\mathcal Y^K(\cdot ,\{S,M\},[0,a_\infty)), \Gamma^{K})$ over $[0,T]$ to $(x, y(t)p_r(x(t),a) \phi(x(t)) \, dt\, da)$ in $\mathbb{D}([0,T], \mathbb{R}_+)^2  \times \mathfrak{M}([0,T]\times \mathcal X)$.
\end{proof}

\section{Examples and comments}
\label{se:exemple}
In this section, we illustrate and apply our convergence results
to examples motivated from ecology. We both recover classical limiting dynamical systems and functional responses and consider some new cases.
We do not discuss of technical points here. Time distributions  of interactions considered in this section satisfy Assumption~\ref{hyp:alpha}.  It can be checked
by using $\lyap: a \mapsto a^{\epsilon}$ or $\lyap: a \mapsto (a_\infty-a)^{-(1+\epsilon)}$ for some $\epsilon>0$. 

Let us first recall that  $T_S(x)$ and $T_M(x)$ are the random time for searching and manipulating
when the density of preys is $x$ preys.
As expected and seen above, the macroscopic death rate of preys  induced by predation is
$$\phi(x)=  \frac{1}{\E[T_S(x)]+\E[T_M(x)]}.$$
Besides, writing
$$\tauxgrowth_S(a)= \tauxnaissance_S(a) - \tauxmort_S(a), \quad \tauxgrowth_M= \tauxnaissance_M- \tauxmort_M$$
 the growth rate of the population size of predators is
\begin{align*}
\psi(x)
&=\phi(x) \E\left[ \int_0^{T_S(x)} \tauxgrowth_S(a) da \, + \, \int_0^{T_M(x)} \tauxgrowth_M(a) da\right].
\end{align*}

\subsection{Classical setting and functional responses : memory less interactions}
 \label{se:exmarkov}

Let's start by the classical case where memory less property is assumed for each component of the dynamic (interaction, birth, death). Times involved are then exponential.
This assumption is probably not realistic for manipulating time in general.
For searching time
it can be justified with the hypothesis of rapid mixing of the preys in the medium where predators live. 
In this case, the growth rate $\psi$ of predators simplifies as
\begin{align*}
\psi(x)
&
= \frac{\tauxgrowth_S \E[T_S(x)]+ \tauxgrowth_M \E[T_M(x)]}{\E[T_S(x)]+\E[T_M(x)]}.
\end{align*}
 We recover some classical functional responses with usual supplementary assumptions : 
\begin{itemize}
\item[$i)$] \emph{ No manipulation   and  search time inversely proportional
to the density:}
 $$T_M(x)=0 \qquad \E[T_S(x)]=\frac{1}{cx},$$
 for some $c>0$. 
 This assumption is justified for instance where rapid mixing allows to say that each prey meets independently  the predator with a small probability after an exponential time, since the minimum of independent exponential variables is exponential distributed and its parameter is the sum of each parameter.\\
 It leads to the classical Holling type I functional response  and Lotka-Volterra form for the consumption of preys (while its counterpart for predators is linear instead of bilinear):
\begin{equation}
\label{eq:exemple-notreLVpresque}
\phi(x)=c\, x, \qquad \psi(x)=(\tauxnaissance_S-\tauxmort_S).
\end{equation}
Let us mention that in our framework we have assumed that $\E(T_M(x))$
is lower bounded on compact set, which excludes the degenerated case $T_M=0$. But 
our proofs extend directly to this situation  (with a single status instead of two)
or can be obtained, at the limit, by letting $\E(T_M^{\varepsilon}(x))$ decrease to $0$ as $\varepsilon \rightarrow 0$.

\item[$ii)$] \emph{Fixed mean manipulation time  and  search time inversely proportional
to the density:}
$$\E[T_M(x)]=t_0>0, \qquad  \E[T_S(x)]=\frac{1}{cx}$$
for some $c>0$. It leads to the classical Holling type II functional response  and Rosenzweig MacArthur / Monod model :
$$
\phi(x)=\frac{cx}{1+t_0cx}, \qquad \psi(x) = \tauxgrowth_S +  (\tauxgrowth_M - \tauxgrowth_S) t_0  \frac{cx}{1+t_0cx}.
$$
Constant $(\tauxgrowth_M - \tauxgrowth_S) t_0$ is related to the  "yield constant" in microbial ecology, as in the chemostat equation for instance. 
\item[$iii)$] \emph{Fixed mean manipulation time  and generalist predator.}
Another usual response  make the searching time of the prey increase faster with low density since the predator may dedicate more time to other species. This can leads to assuming the following assumption:
 $$\E[T_M(x)]=t_0>0, \qquad \E[T_S(x)]=\frac{1}{cx^2}.$$
 This ensures the Holling type III functional response. To describe this generalist behavior of the predator more precisely, this  would require to consider additional species in our model, see also \cite{BBC18} for instance.
\end{itemize}

\subsection{A first generalization : non-exponential time of interaction}

In the models considered in the previous section, without fast mixing, we do not expect that searching time is  exponentially distributed. 
We refer e.g. \cite{BBC18} for some simple models where the law of the searching
time is described or 
to \cite{Duijns} for some data. \\
For instance, if preys are spatially uniformly distributed with fixed positions,
a toy model \cite[Page 11]{BBC18} with motion towards the nearest prey leads to
$$
\mathbb{E}[T_S(x)] = \frac{c}{\sqrt{x}}.
$$
Besides, the manipulating is not expected to be exponentially distributed either.

A first consequence of our results is that we can extend the convergence
results to this non-exponentially distributed times. From the point of view of prey consumption
and at the first order of the macroscopic scale, the form of the distribution has no effect (beyond its mean). Let us turn to new effects due to non-exponential laws.

\subsection{Influence of distribution of time interaction}
The  consumption of prey at a first order macroscopic approximation is only sensitive 
to mean time of interactions trough the function $\phi$.
The impact of predation on the evolution of predators may be more subtle.

Let's give an explicit example. Assume that the individual growth rate is linked 
to the consumption of preys via the following age dependance 
$$
\forall a \leq 0, \ \tauxgrowth_S(a)= - A +  B e^{-Ca}.
$$
for some $A, B, C >0$. 
Assuming $A+B>0$, it models the fact that the  more a predator is waiting for a prey, the less it  (successively) reproduces and/or the fastest it dies. 
For sake of simplicity and following Section \ref{se:exmarkov}, let us consider the case when $T_S(x)$ has exponential distribution with mean $1/cx$, we have
\begin{align*}
\E\left[ \int_0^{T_S(x)} \tauxgrowth_S(u) du\right] 
&= -\frac{A}{cx} + \frac{Bcx}{Ccx+1}.
\end{align*}
and, again in the setting of Subsection~\ref{se:exmarkov}, this gives
\begin{itemize}
\item[$i)$] Without manipulation, i.e. $T_M(x)=0$, we get
\begin{equation}
\label{eq:exemple-notreLV}
\phi(x)= cx, \qquad  \psi(x)=  -A + B \frac{(cx)^2}{Ccx +1}.
\end{equation}
In particular, $\psi(x)\rightarrow-A$ as $x\rightarrow 0$ and $\psi(x)\sim_{x\rightarrow \infty} \frac{Bc}{C} x$. That is $\psi(x)$ behaves as $-A+B'x$, with $A<0$  as in the Lotka-Volterra model. 
\item[$ii)$] With fixed positive manipulation, i.e. $t_0=\E(T_M (x))$
and $\lambda_M(x)=\lambda_M $:
$$
\phi(x)= \frac{cx}{1+t_0cx}, \qquad 
\psi(x) = \frac{cx}{1+t_0cx} \left(-\frac{A}{cx} + \frac{Bcx}{Ccx+1} + \lambda_M t_0 \right)
$$
Thus is $\psi(x) \rightarrow  A <0 $ as $x\rightarrow 0$ and $\psi(x) \rightarrow \frac{B}{C t_0} + c \lambda_M >0 $ as  $x\rightarrow \infty$. Then it behaves as classical Holling type II prey-predator model:
$$
\psi(x) = -A + \mu \frac{x}{x+K}.
$$
\end{itemize}
We then recover here the two classic forms without directly assuming a conversion of prey into predators.

\subsection{About the behavior of the limiting ODEs}

In this work, we show the relevance of dynamical system of the form :

\begin{equation*}
\left\{
\begin{array}{rcl}
x'(t)&=&(\tauxnaissance - \tauxmort)x(t)-y(t)\phi(x(t)),\\
y'(t)&=&y(t)\psi(x(t)).
\end{array}
\right.
\end{equation*}
Let us just give some hints on its long-time behavior, even if a large literature exists, for the study of Lotka-Volterra type systems, in which these remarks are detailed in more details.

On the first hand, when $\phi :x \mapsto cx$, for some $c>0$, then it can be easily shown that, whatever the function $\psi$, the function
$$
t\mapsto L(x(t),y(t))
$$
is constant with the law conservation 
$$
L(x,y) = \lambda \log(y) - cy  - \int_1^{\log(x)} \psi(e^u) du.
$$
In particular, when the curve $L(x,y) = L(x(0),y(0))$ is bounded then $(x,y)$ is periodic, this is for instance the case when \eqref{eq:exemple-notreLV} holds. In contrast, even if there is a conservation law, the choice \eqref{eq:exemple-notreLVpresque} leads the predator number going to infinity and prey number to $0$.

On the second hand, in more generality, an equilibrium point $(x^\star,y^\star)$ of this system verifies
$$
\psi(x^\star)=0, \qquad y^\star = \frac{\lambda x^\star}{\phi(x^\star)}.
$$
Natural assumptions are decreasing rates $a\mapsto \lambda_S(a), a\mapsto\lambda_M(a)$ and mean times of interaction $x\mapsto\mathbb{E}[T_S(x)]$ and $x\mapsto\mathbb{E}[T_M(x)]$. Unfortunately, under these assumptions, we cannot state uniqueness of an equilibrium points.

Moreover the Jacobian at this equilibrium is equal to
$$
J_{(x^\star,y^\star)}= \begin{pmatrix}
    (x^\star \phi(x^\star) -\phi'(x^\star))y^\star     & -\phi(x^\star) /x^\star \\ 
    \psi'(x^\star) y^\star       & 0 
\end{pmatrix}.
$$
When $(x^\star \phi(x^\star) -\phi'(x^\star))y^\star \neq 0$ (this naturally exclude $\phi :x \mapsto cx$) and $\psi(x^\star) \neq 0$ then the associated equilibrium is unstable. In others cases, it is not exponentially stable.

\subsection{Discussion, scaling and extensions}
\label{sect:discussion}
In this work, we are interested in  cases where the  number of preys is much larger than the number of predators and  the time for interactions  is much shorter that the time to give birth or the time to die for preys and predator.
Besides the time for searching may impact the survival of offsprings (via natality rate)
or the death probability.\\
This seems reasonable for many interactions. For instance, fox-rabbit, wolf-deer/caribou, white bear-seal, bear-fish, bird-worm, where the time for searching is of order of days or a week, while reproduction is of order of a year for both (and several years for death).


For most of the species mentioned above, extension of the model to several preys for one predator and interference between several predators should be considered. Also adding the biological age or non-exponential clock for birth and death (season effect, maturity, menopause ...) are interesting points to address. We focused here on relaxing the memory less of properties of interactions.  Such extensions seems to be accessible via the framework and techniques developed here even if technicalities may fast increase.\\
 
 Determining stochastic fluctuations around the limiting deterministic system is a challenging and interesting problem. It is relevant in particular when population sizes are not very large.
 The variance of interaction times should appear to describe fluctuations and may be much larger than in the exponential case.
The averaging approach of \cite{KKP} provides a natural path for this issue via Poisson equation. Adapting the techniques to our infinite dimensional setting seems challenging.\\

{\bf Acknowledgement}
This work  was partially funded by the Chair "Mod\'elisation Math\'ematique et Biodiversit\'e" of VEOLIA-Ecole Polytechnique-MNHN-F.X and ANR ABIM 16-CE40-0001.
The authors are also grateful to Sylvain Billiard and Marius Bansaye for stimulating discussions about prey-predator interactions.

\bibliographystyle{alpha}
\bibliography{ref}

\end{document}